\def \ve{\varepsilon}
\def \eps{\epsilon}
\def \be{\begin{equation}}
\def \ee{\end{equation}}
\def \eqdef{\mathop{=}\limits^{\hbox{\tiny def}}}
\newtheorem{theorem}{Theorem}[section]
\newtheorem{lemma}{Lemma}[section]
\newtheorem{definition}[theorem]{Definition}
\newtheorem{remark}{\rm Remark\/}
\def \trait (#1) (#2) (#3){\vrule width #1pt height #2pt depth #3pt}
\def \fin{\hfill
	\trait (0.1) (5) (0)
	\trait (5) (0.1) (0)
	\kern-5pt
	\trait (5) (5) (-4.9)
	\trait (0.1) (5) (0)
\medskip}
\font \twbbb= msbm10 scaled \magstep0                 
\font \tenbbb= msbm7 scaled \magstep0                 
\begin{document}

\renewcommand{\theequation}{\thesection .\arabic{equation}}

\renewcommand{\baselinestretch}{1.2}
\title{Homogenization of immiscible compressible two--phase  flow in random porous media}
\author{B. Amaziane$^{1}$, L. Pankratov$^{1,2}$, A. Piatnitski$^{3,4}$}
\date{}
\maketitle

$^1$ Laboratoire de Math\'ematiques et de leurs Applications,  CNRS-UMR 5142,
Universit\'e de Pau, Av. de l'Universit\'e, 64000 Pau, France.
E--mail: {\tt brahim.amaziane@univ-pau.fr}
\medskip

$^2$ Laboratory of Fluid Dynamics and Seismics,
Moscow Institute of Physics and Technology, 9 Institutskiy per., Dolgoprudny, Moscow Region, 141700, Russian Federation.\\
E--mail: {\tt leonid.pankratov@univ-pau.fr}
\medskip

$^3$The Arctic University of Norway, campus Narvik, Postbox 385, Narvik, 8505, Norway

\smallskip
$^4$
 Institute for Information Transmission Problems of RAS, Bolshoy Karetny per., 19, Moscow, 127051, Russia.\ \
E--mail: {\tt apiatni@iitp.ru}

\begin{abstract}
The paper deals with homogenization of a model problem describing an immiscible compressible two-phase flow in random statistically homogeneous porous media.  We derive the effective (macroscopic) problem and prove the convergence of
solutions.

Our approach relies on stochastic two-scale convergence techniques, the realization-wise notion of stochastic two-scale
convergence being used. Also, we exploit various a priori estimates as well as monotonicity and compactness arguments.
\end{abstract}

{\bf AMS Subject Classification:} 35B27; 35K65; 74Q15; 76M50; 76S05; 76T10;

{\bf Keywords:} heterogeneous porous media; homogenization; random porous media; two--phase flow.

\section{Introduction}
\label{intro}
The problem of description of two-phase and multiphase flows through highly heterogeneous media is faced in many branches of engineering and applied sciences
such as groundwater hydrology, petroleum engineering, environmental sciences.
More recently, multiphase flow attracted an essential interest of engineers and
researches dealing with deep geological repository for nuclear waste and 
and for storage sequestration.

The crucial goal here is to isolate the radioactive waste from the biosphere. Repositories for the disposal of long-lived nuclear waste generally rely on a multi-barrier system that typically consists of the natural geological barriers
provided by  the host rock and its surroundings and an engineered barrier system
that comprises engineered materials placed within a repository.

In the framework of designing nuclear waste geological repositories problems of
two-phase flow of water and gas naturally appear, for further details  see \cite{Norr2009}, \cite{Shaw}.  
Recent studies have shown that in such engineered systems  considerable amounts of gases, mainly hydrogen, might be produced for instance due to the corrosion of metallic components used in the repository design. It is than important to evacuate the gas phase and avoid
overpressure, thus preventing mechanical damages.
Therefore, the appearance and transport
of a gas phase is  an important issue of concern with regard to the capability
of the engineering and natural barriers to evacuate the gas phase.  Thus, it is  necessary to take these issues into account when estimating the performance of a geological repository, see \cite{SengerEtAl}, \cite{CroiseEtAl},  \cite{Shaw} and references therein for further details.

As was said above the main source of gas is the corrosion phenomena of steel lines,
waste containers, ets. However, there are other sources of gas such as the water radiolysis by radiation emitted by the nuclear waste  and the microbial activity
that generates some methane and carbon dioxide.  Typically, the proportion of hydrogen in the total mass of produced gases is above 90\%.

Since in the subsurface permeability heterogeneity occurs in many different length scales, numerical models of flow cannot, in general, resolve all the plurality of scales. Therefore, approaches based on upscaling or homogenization are required to represent the effect of subgrid scale variations on larger scale flow.

The problem of homogenization of multiphase flow through heterogeneous porous media
has quite a long history, a number of methods and approaches have been elaborated. There is a vast literature on this topic. Here we will merely mention some references to mathematical homogenization results for flow and transport in porous media. There were a number of works devoted to the qualitative theory of systems of equations  describing incompressible immiscible two-phase flow in porous media. Among them are
\cite{ant-kaz-mon1990}, \cite{Arbogast-92}, \cite{Chen-I,Chen-II}, \cite{Kro-Luck},    where the questions of existence, uniqueness and regularity of a weak solution were investigated.
For the results on  homogenization of incompressible single phase flow through heterogeneous media we refer to \cite{Bourgeat-Gip2004}, \cite{Bourgeat-Marusic2005}, \cite{Bourgeat-Marusic2010}, \cite{Bourgeat-Piat2010}, \cite{Gloria-Etal} and the references therein.
In the works \cite{HOS-2012} and \cite{hor}   homogenization problems for a  compressible miscible flow in porous media have been studied.
Important qualitative results such as existence and regularity of weak solutions for compressible immiscible two-phase flow in heterogeneous
porous media were obtained in  \cite{CG-MS1}, \cite{CG-MS2}, \cite{CG-MS4}, and in the case of discontinuous capillary pressure
in the work \cite{app-1}.

The problem of homogenization of compressible immiscible two-phase flow in porous media with a periodic microstructure has been addressed in our previous works \cite{our-siam}, \cite{app-2}, \cite{app-nhm17}, in the latter work the case of several types of rocks has been considered.

The mentioned articles considered homogenization problems in  porous media with a periodic microstructure. In particular,  the rigorous homogenization results  obtained in these papers are valid under the assumption that the corresponding porous medium is periodic or locally periodic.
 Although the results obtained for systems with periodic coefficients
 provide an important information on the effective behaviour of
 the two-phase flow of interest,  the description of the effective behaviour of the flow based on the periodicity assumption usually cannot be accurate ,
 except for the case of artificial materials.
 In the case of natural materials the assumption that the porous media are random statistically homogeneous is much more realistic and allows to provide more accurate description of the effective characteristics.

This paper focuses on modelling and effective description of immiscible compressible two-phase flows through heterogeneous reservoirs
with random statistically homogeneous geometric structure, in the framework of the geological disposal of nuclear waste.  We will be concerned with a nonlinear system of convection-diffusion equations in a domain modeling the flow and transport of immiscible compressible fluids through heterogeneous porous media, taking into account capillary
and gravity effects.  More precisely, we will assume that there is an incompressible wet phase (water) and a compressible gas
phase (hydrogen) in the context of gas migration through engineered and geological barriers for a deep repository for nuclear waste,
for more details see \cite{app-1}.

Here we consider a single rock-type model. The original microscopic model is defined in a domain with random statistically
homogeneous ergodic microstructure. In our context it means that both the porosity and the absolute permeability are rapidly oscillating statistically homogeneous random
functions of the microscopic variable $y=x/\varepsilon$, where $x$ is the macroscopic variable, and  $\varepsilon>0$ is a small parameter
 that represents the characteristic length scale of the medium. The problem is formulated in terms of the wetting saturation phase (water phase) and nonwetting pressure phase (gas phase).   The corresponding system of equations is derived from the mass conservation laws of both fluids on the one side   and from the relations between the velocities and the pressure gradients as well as the gravitational forces.
These relations are provided by the Darcy-Muscat law. The resulting system consists of a nonlinear parabolic equation for the gas pressure
coupled with a degenerate parabolic convection-diffusion equation for the water saturation, both equations are subject to appropriate boundary
and initial conditions.

In this system the diffusion operators degenerates due to the capillary effects, the degeneracy of this type can be observed both in compressible and incompressible flows. Another type of degeneracy occurs in the region where the gas saturation vanishes.
In this region  the gas density cannot be determined by its evolution since the gas phase is not presented.

The degeneracy and strong coupling of the equations in the system of interest make the proof of homogenization result rather involved
especially in the case of random coefficients.  In particular, we are not able to obtain uniform estimates for the gradients of the phase pressures.
To overpass this difficulty we reformulate the studied problem in terms of the global pressure and the saturation. This leads to a less strong coupling between the equations of the system.  However, we still do not have uniform estimates for the saturation gradient. In addition, due to degeneracy, solutions do not possess much regularity.  As a result,
passing to the limit in the studied system is not direct and requires rather delicate arguments.

Homogenization problem for a two-phase incompressible flow in a random medium was treated successfully in \cite{BKM95}.
However, to our best knowledge, rigorous homogenization results for an immiscible compressible multi-phase flow in a random medium
are missed in the existing literature.

The paper is organized as follows. Section \ref{micromodel} deals with problem setup. We describe the physical model,
introduce the corresponding system of equations and provide the assumptions on the data.

In Section \ref{for-exist-result} we recall the results on the existence of a solution and obtain a number of a priori estimates.

Finally, in Section \ref{sec_hm} we formulate and prove the homogenization theorem.

\section{Problem setup}
\label{micromodel}


We consider an immiscible compressible two-phase flow process in a heterogeneous porous medium with a random statistically homogeneous  microstructure. We suppose that the porous medium occupies a porous reservoir  $\mathcal{Q} \subset \mathbb{R}^d$, $d = 1, 2, 3$,  being a bounded connected Lipschitz domain.
We then assume that there are  general wetting  and a non-wetting phases, in particular examples it might  be the phases water and gas.  For presentation simplicity we assume that there is no a source/sink term.

For further details we refer the reader to \cite{ant-kaz-mon1990, GC-JJ,Chen-Huan-Ma-06,HR}.


To introduce a random microstructure we assume that $(\Omega,\mathcal{F},\mathbf{P})$ is a standard probability space
equipped with an ergodic dynamical system $\mathcal{T}_x$, $x\in\mathbb R^d$,  that is\\[-2mm]
\begin{itemize}
  \item
  $
  \mathcal{T}_{x+y}=\mathcal{T}_x\circ \mathcal{T}_y,\quad x\,,y\in\mathbb R^d,\qquad \mathcal{T}_0={\rm Id};
  $
  \item
  $ \mathbf{P}(\mathcal{T}_x(A))=\mathbf{P}(A)\qquad \hbox{for all } x\in \mathbb R^d, \ \ A\in\mathcal{F}$;
  \item
  $  \mathcal{T}_\cdot \,:\, \mathbb R^d\times\Omega\mapsto\Omega $ is a measurable mapping from  $\mathbb R^d\times\Omega$
  to $\Omega$, $\mathbb R^d$ with being equipped with the Borel $\sigma$-algebra.\\[-2mm]
\end{itemize}
The ergodicity of $\mathcal{T}_\cdot$ means the probability of any set $A\in\mathcal{F}$ which is invariant with respect to $\mathcal{T}_x$,
$x\in\mathbb R^d$, is equal to zero or one.

Also we assume that there are a positive random variable $\boldsymbol{\Phi}=\boldsymbol{\Phi}(\omega)$ and a positive definite random matrix $\boldsymbol{K}=\boldsymbol{K}(\omega)$ and denote
$$
\Phi(x)=\boldsymbol{\Phi}(\mathcal{T}_x\omega),\qquad  K(x)=\boldsymbol{K}(\mathcal{T}_x\omega),
$$
that is $\Phi(x)$ and $K(x)$ are realizations of  $\boldsymbol{\Phi}$ and $\boldsymbol{K}$.

We then scale this
random structure with a parameter $\ve$ which represents the ratio of the typical size of local inhomogeneities  to the size of the whole region
$\mathcal{Q}$,  and definite  the porosity and the absolute permeability tensor by
\begin{equation}\label{reali}
\Phi^\ve(x)=\Phi\Big(\frac x\ve\Big)=\boldsymbol{\Phi}(\mathcal{T}_\frac x\ve\omega),\qquad  K^\ve(x)=K\Big(\frac x\ve\Big)=\boldsymbol{K}(\mathcal{T}_\frac x\ve\omega),
\end{equation}
we assume that $0<\ve \ll 1$ is a small parameter that tends to zero.


Prior to writing down the equations of the model, we introduce the following notation.
Let  $S^\ve_w = S^\ve_w(x, t)$ and $S^\ve_g = S^\ve_g(x, t)$
be  the saturations of the wetting and non-wetting phases, respectively;
$k_{r,w} = k_{r,w}(S^\ve_w)$ and $k_{r,g} = k_{r,g}(S^\ve_g)$ stand for the relative permeabilities of the corresponding phases;
$p^\ve_w = p^\ve_w(x,t)$, $p^\ve_g = p^\ve_g(x,t)$
are their pressures;
and {$\varrho_w$, $\varrho_g$} are the corresponding densities.  We then fix an arbitrary time interval $[0,T]$, $T>0$, and denote $\mathcal{Q}_T = \mathcal{Q} \times ]0, T[$.
The equations of the mass balance of each phase read (see, e.g., \cite{GC-JJ,Chen-Huan-Ma-06,HR})
\begin{equation}
\label{debut1}
\left\{
\begin{array}[c]{ll}
\displaystyle
\Phi^\ve(x) \frac{\partial}{\partial t}(S^\ve_w\, \varrho_w(p^\ve_{w})) +
{\rm div}\, (\varrho_w(p^\ve_{w}) \, \vec q^{\,\,\ve}_w) = 0
\quad {\rm in}\,\, \mathcal{Q}_T; \\[3mm]
\displaystyle
\Phi^\ve(x) \frac{\partial}{\partial t}(S^\ve_g\, \varrho_g(p^\ve_{g})) +
{\rm div}\, (\varrho_g(p^\ve_{g}) \, \vec q^{\,\,\ve}_g) = 0
\quad {\rm in}\,\,\mathcal{Q}_T;  \\[3mm]
\end{array}
\right.
\end{equation}
where  the velocities of the wetting and non-wetting phases
 $\vec q^\ve_{w}$ and $\vec q^\ve_{g}$ satisfy the Darcy-Muskat law:
\begin{equation}
\label{eq.qw}
\vec q^\ve_{w}= -K^\ve(x) \lambda_{w}(S^\ve_{w})
\bigg(\nabla p^\ve_{w} - \varrho_w(p^\ve_{w}) \vec{g}\bigg)
\qquad {\rm with}\,\, \lambda_{w}(s) = \frac{k_{r,w}}{\mu_{w}}
(s);
\end{equation}
\begin{equation}
\label{eq.qg}
\vec q^\ve_{g} = - K^\ve(x)\lambda_{g}(S^\ve_{g})
\bigg(\nabla p^\ve_{g} - \varrho_g(p^\ve_{g}) \vec{g}\bigg) \qquad
{\rm with}\,\, \lambda_{g}(s) = \frac{k_{r,g}}{\mu_{g}}
(s).
\end{equation}
Here $\vec g$, $\mu_w, \mu_g$ stand for the gravity vector and the viscosities of the
wetting and non-wetting phases, respectively.
We recall that the source term is neglected.

\medskip
In what follows we assume that the density of the wetting phase is a constant, without loss of generality
this constant is equal to $1$ so that ${\varrho_w(p^\ve_{w})} =1$.
We also assume that the density of non-wetting phase $\varrho_g$ is a continuously differentiable increasing function
such that
\begin{equation}
\begin{array}[c]{cl}
\varrho_g(p)=\varrho_{\rm min} \quad {\rm for} \,\, p \leqslant p_{\rm min};\qquad
\varrho_g(p)=\varrho_{\rm max} \quad {\rm for} \,\, p \geqslant p_{\rm max};
 \\[2mm]
\varrho_{\rm min}<\varrho_g(p)<\varrho_{\rm max} \,  \quad {\rm for} \,\,
p_{\rm min} <p < p_{\rm max} ;
\end{array}
\label{density-0}
\end{equation}
with
\begin{equation}
0 < \varrho_{\rm min} < \varrho_{\rm max} < +\infty \quad {\rm and} \quad
0 < p_{\rm min} < p_{\rm max} < +\infty.
\label{pro}
\end{equation}

By the definition of saturations,
\begin{equation}
S^\ve_{w} + S^\ve_{g} = 1 \qquad {\rm with } \,\, S^\ve_{w},\
S^\ve_{g} \geqslant 0.
\label{satura}
\end{equation}
Letting $S^\ve = S^\ve_{w}$ we get
\begin{equation}
S^\ve_{w}=S^\ve, \qquad S^\ve_{g}=1-S^\ve.
\label{satura1-1}
\end{equation}
Then the curvature of the contact surface between the two fluids links the jump
of pressure of two phases to the saturation by the capillary pressure law:

Then the capillary pressure law links the curvature of the contact surface between the two fluids,  the jump
of pressure of two phases and the saturation. It reads
\begin{equation}
\label{eq.pc}
P_{c}(S^\ve) = p^\ve_{g} - p^\ve_{w},
\end{equation}
where  $P_{c}(s)$ is a differentiable function such that $P^\prime_c(s) < 0$ for all $s \in [0, 1]$,
 and $P_{c}(1) = 0$.

From now on abusing slightly the notation we write $\lambda_g(S^\ve)$ instead of  $\lambda_g(1 - S^\ve)$.
Combining the above equations we arrive at the following system of equations:
\begin{equation}
\label{debut2-eps}
\left\{
\begin{array}[c]{ll}
\displaystyle
\Phi^\ve(x) \frac{\partial S^\ve}{\partial t} - {\rm div}\, \bigg\{K^\ve(x) \lambda_{w}(S^\ve)
\big(\nabla p^\ve_{w} - \vec{g}\big)\bigg\} = 0 \quad {\rm in}\,\, \mathcal{Q}_{T}; \\[5mm]
\displaystyle
\Phi^\ve(x) \frac{\partial \Theta^\ve}{\partial t} -
{\rm div}\, \bigg\{ K^\ve(x) \lambda_{g}(S^\ve)
\varrho_{g}(p^\ve_{g})\big(\nabla p^\ve_{g} - \varrho_{g}(p^\ve_{g})\vec{g} \big)\bigg\} = 0
\quad {\rm in}\,\, \mathcal{Q}_{T}; \\[5mm]
P_{c}(S^\ve) = p^\ve_{g} - p^\ve_{w} \quad {\rm in}\,\, \mathcal{Q}_{T},
\end{array}
\right.
\end{equation}
where
\begin{equation}
\label{theta-not-eps}
\Theta^\ve \eqdef \varrho_{g}(p^\ve_g)(1 - S^\ve).
\end{equation}
System \eqref{debut2-eps}--\eqref{theta-not-eps} is equipped  with the following boundary and initial conditions:\\[2mm]
{$\mathtt {Boundary\ conditions.}$} We suppose that
$\partial \mathcal{Q}$ consists of two $(d-1)$-dimensional sets $\Gamma_{\rm inj}$
and $\Gamma_{\rm imp}$ with a Lipschitz boundary such that  $\Gamma_{\rm inj} \cap \Gamma_{\rm imp} =
\emptyset$ and $\partial \Omega = \overline\Gamma_{\rm inj} \cup \overline\Gamma_{\rm imp}$.
The boundary conditions are given by:
\begin{equation}
\label{bc3-eps}
\left\{
\begin{array}[c]{ll}
p^\ve_{g}(x, t) = p^\ve_{w}(x, t) = 0 \quad {\rm on} \,\, \Gamma_{\rm inj} \times (0,T); \\[2mm]
\vec q^{\,\ve}_{w} \cdot \vec \nu = \vec q^{\,\ve}_{g} \cdot \vec \nu = 0 \quad {\rm on} \,\,
\Gamma_{\rm imp} \times (0,T),\\
\end{array}
\right.
\end{equation}
where the velocities $\vec q^{\,\ve}_{w}, \vec q^{\,\ve}_{g}$ are defined in \eqref{eq.qw}--\eqref{eq.qg}.\\[2mm]
$\mathtt{ Initial\ conditions}$. The initial conditions read:
\begin{equation}
\label{init1-eps}
p^\ve_{w}(x, 0) = p_{w}^{\bf 0}(x) \quad {\rm and} \quad
p^\ve_{g}(x, 0) = p_{g}^{\bf 0}(x) \quad {\rm in} \,\, \mathcal{Q}.
\end{equation}

Notice that from \eqref{bc3-eps} and \eqref{eq.pc} it follows that $S^\ve=1$ on $\Gamma_{\rm inj} \times (0,T)$.
The initial condition for $S^\ve$ is uniquely defined by the equation
\begin{equation}\label{snol-ini}
P_{c}(S^{\bf 0}(x)) = p_{g}^{\bf 0}(x) - p_{w}^{\bf 0}(x).
\end{equation}
Then according to \eqref{theta-not-eps} the initial condition for $\Theta^\ve$ reads
\begin{equation}
\label{theta-not-initi}
\Theta^{\bf 0} = \varrho_{g}(p^{\bf 0}_g)(1 - S^{\bf 0}).
\end{equation}

In the next section we specify the conditions on the data of system  \eqref{debut2-eps}--\eqref{theta-not-initi} which ensure
the existence of a solution to this system. Our goal is to study the asymptotic behaviour of this solution as $\ve\to0$,
and to construct the limit problem.

\subsection{Main assumptions}
\label{main-assump}

Here we provide our assumptions  on the data of system \eqref{debut2-eps}--\eqref{theta-not-initi}.
In order to formulate these assumptions we need two auxiliary functions. Namely, we denote
\begin{equation}
\label{gp+5}
\alpha(s) =
\frac{\lambda_{g}(s)\, \lambda_{w}(s)} {\lambda(s)} \left| P^\prime_{c}(s) \right|.
\end{equation}
and
\begin{equation}
\label{upsi-1}
\textstyle
\beta(s) = \int\limits_0^s \alpha(\xi)\, d\xi,
\end{equation}
We assume that the following conditions are fulfilled:
\begin{itemize}

\item[{\bf (A.1)}] The random variable $\boldsymbol{\Phi}$ belongs to $L^{\infty}(\Omega)$; moreover, and there are  constants
$\phi_-, \phi^+$ such that $0 < \phi_- < \phi^+$ and
\begin{equation}
\label{fifi}
0 < \phi_- \leqslant \boldsymbol{\Phi} \leqslant \phi^+ < 1 \quad {\rm a.\,s.\,\, in} \,\, \Omega.
\end{equation}

\item[{\bf (A.2)}] The random field $\boldsymbol{K}$ belongs to  $(L^{\infty}(\Omega))^{d\times d}$, and there exist
constants $K_-, K^+$ such that  $0 < K_- < K^+$ and
\begin{equation}
\label{tensork}
K_- |\xi|^2 \leqslant (\boldsymbol{K}(\omega)\xi, \xi) \leqslant K^+ |\xi|^2 \,\,
{\rm for \, all \, \xi \in \mathbb{R}^d, \,\, a.s. \, in}\,\, \Omega.
\end{equation}

\item[{\bf (A.3)}] The function $\varrho_{g} = \varrho_g(p)$ satisfies the conditions in (\ref{density-0}).

\item[{\bf (A.4)}] The capillary pressure $P_{c}(s)$ is a
$C^1([0, 1]; \mathbb{R}^+)$ function such that $P_{c}^\prime(s) < 0$ in $[0, 1]$
and $P_{c}(1) = 0$.

\item[{\bf (A.5)}] The functions $\lambda_{w}, \lambda_{g}$ are continuous
on the interval $[0, 1]$ and possess the following properties:
\begin{equation}\label{prop_lam_1}
0 \leqslant \lambda_{w}(s), \lambda_{g}(s) \leqslant 1\quad \hbox{for all } s\in[0, 1]; \qquad
\lambda_{w}(0) = \lambda_{g}(1) = 0;
\end{equation}
 there is a constant $L_0>0$ such that
 \begin{equation}\label{prop_lam_2}
 \lambda(s) =  \lambda_{w}(s) + \lambda_{g}(s)
\geqslant L_0 \quad \hbox{for all } s\in[0, 1].
\end{equation}
\item[{\bf (A.6)}] The function $\alpha$ defined in \eqref{gp+5}  belongs  to $C^1([0, 1]; \mathbb{R}^+)$.
Moreover,  $\alpha(s) > 0$ for $s\in(0, 1)$.  Notice that due to \eqref{prop_lam_1}  we have $\alpha (0) = \alpha (1) = 0$.

\item[{\bf (A.7)}]  The function $\beta^{-1}$, inverse of $\beta$ 
is H\"older continuous  on the interval
$[0, \beta(1)]$ that is there exist constants $\theta\in (0,1)$ and  $C_\beta>0$ such that for all
$s_1, s_2 \in [0, \beta(1)]$ the following inequality holds:
$$
\left|\beta^{-1}(s_1) - \beta^{-1}(s_2) \right| \leqslant C_\beta \, |s_1 - s_2|^\theta.
$$

\item[{\bf (A.8)}] The initial conditions  $p^{\bf 0}_{g}$ and
$p^{\bf 0}_{w}$ are  $L^2(\mathcal{Q})$ functions.

\item[{\bf (A.9)}] The function
$S^{\bf 0}$ satisfies the inequality $0 \leqslant S^{\bf 0} \leqslant 1$ a.e. in $\mathcal{Q}$.
\end{itemize}

In the existing literature conditions similar to those in {\bf (A.1)}--{\bf (A.9)} are commonly used in the theory of  multi-phase flow in porous media.

\subsection{Global pressure and fractional flow}
\label{gp-relat}

In this section, we  rearrange the system of equations in \eqref{debut2-eps}  using the notion of the so called {\it global pressure}. It was introduced in the works  \cite{Ant72, AKM83}.
Here we follow the approach developed in \cite{ant-kaz-mon1990,GC-JJ}, see also \cite{Chen-Huan-Ma-06}.
The main idea is to replace the studied two-phase flow with a flow of a fictive fluid for which  the
Darcy law holds with a non-degenerate coefficient. This rearrangements helps us to obtain several important a priori estimates
and finally the compactness results.\\
We are looking
for a pressure ${\mathsf P}^\ve$ and the coefficient $\gamma(s)$ such that $\gamma(s) > 0$
for all $s \in [0, 1]$, and
\begin{equation}
\label{gp-gam}
\lambda_{w}(S^\ve) \nabla p^\ve_{w} + \lambda_{g}(S^\ve) \nabla p^\ve_{g} = \gamma(S^\ve) \nabla {\mathsf P}^\ve.
\end{equation}
The global pressure ${\mathsf P}^\ve$ is defined by
\begin{equation}
\label{gp1}
p^\ve_{w} = {\mathsf P}^\ve + {\mathsf G}_{w}(S^\ve)
\quad {\rm and} \quad p^\ve_{g} = {\mathsf P}^\ve + {\mathsf G}_{g}(S^\ve)
\end{equation}
with
\begin{equation}
\label{gp3}
{\mathsf G}_{g}(s) = {\mathsf G}_{g}(0) +
\int\limits_0^{s} \frac{\lambda_{w}(r)} {\lambda(r)} \,P^\prime_{c}(r)\, dr,\qquad
{\mathsf G}_{w}(s) = {\mathsf G}_{g}(s) - P_{c}(s) ,
\end{equation}
and
\begin{equation}
\label{gp4}
\lambda(s) = \lambda_{w}(s) + \lambda_{g}(s).
\end{equation}
Due to \eqref{eq.pc} the function ${\mathsf P}^\ve$ is well defined.
Since
\begin{equation}
\label{gp4.1_biss}
\nabla {\mathsf G}_{w}(S^\ve) = - \frac{\lambda_{g}(S^\ve)} {\lambda(S^\ve)}
P^\prime_{c}(S^\ve)\, \nabla S^\ve,
\end{equation}
it is straightforward to check that
\begin{equation}
\label{gp2}
\lambda_{g}(S^\ve) \nabla {\mathsf G}_{g}(S^\ve) + \lambda_{w}(S^\ve) \nabla {\mathsf G}_{w}(S^\ve)
= 0
\end{equation}
and
$$
\lambda_{w}(S^\ve) \nabla p^\ve_{w} + \lambda_{g}(S^\ve) \nabla p^\ve_{g} = \lambda(S^\ve) \nabla {\mathsf P}^\ve +
\Big\{\lambda_{g}(S^\ve) \nabla {\mathsf G}_{g}(S^\ve) + \lambda_{w}(S^\ve) \nabla {\mathsf G}_{w}(S^\ve)\Big\}
=\lambda(S^\ve) \nabla {\mathsf P}^\ve.
$$
It remains to set $\gamma(s) = \lambda(s)$, and (\ref{gp-gam}) follows.

Notice that from (\ref{gp3}) we get:
\begin{equation}
\label{gp+grad}
\lambda_{w}(S^\ve) \nabla {\mathsf G}_{w}(S^\ve) = \alpha(S^\ve) \nabla S^\ve
\quad {\rm and} \quad
\lambda_{g}(S^\ve) \nabla {\mathsf G}_{g}(S^\ve) = - \alpha(S^\ve) \nabla S^\ve.
\end{equation}
It is also convenient to introduce the following quantities:
\begin{equation}
\label{bbb}
\mathfrak{a}(s) =
\sqrt{\frac{\lambda_{g}(s)\, \lambda_{w}(s)} {\lambda(s)}}\, \left| P^\prime_{c}(s) \right|\quad \hbox{and } \
\mathfrak{b}(s) = \int\limits_0^s \mathfrak{a}(\xi)\, d\xi.
\end{equation}
After straightforward computations, considering the definition of the global pressure,  (\ref{upsi-1}) and \eqref{bbb}, we obtain
\begin{equation}
\label{gp6-new}
\lambda_{g}(S^\ve) |\nabla p^\ve_{g}|^2 + \lambda_{w}(S^\ve) |\nabla p^\ve_{w}|^2
=
\lambda(S^\ve) |\nabla {\mathsf P}^\ve |^2 + \left|\nabla \mathfrak{b}(S^\ve) \right|^2
\end{equation}
and
\begin{equation}
\label{bbb-2}
\lambda_{w}(S^\ve) \nabla p^\ve_{w}
 = \lambda_{w}(S^\ve) \nabla {\mathsf P}^\ve + \nabla \beta(S^\ve),
\quad {\rm and} \quad \lambda_{g}(S^\ve) \nabla p^\ve_{g} = \lambda_{g}(S^\ve) \nabla {\mathsf P}^\ve -
\nabla \beta(S^\ve).
\end{equation}
Also, since by condition {\bf (A.5)} the functions $\lambda_{w}$ and  $\lambda_{g}$ are bounded, we have
\begin{equation}
\label{bbb-1}
\left|\nabla \beta(S^\ve) \right|^2=\frac{\lambda_{g}(S^\ve)\, \lambda_{w}(S^\ve)} {\lambda(S^\ve)}
\left|\nabla \mathfrak{b}(S^\ve) \right|^2
\leqslant C\,\left|\nabla \mathfrak{b}(S^\ve) \right|^2.
\end{equation}
It remains to determine the initial and boundary conditions for  ${\mathsf P}^\ve$.
 The initial condition can be easily derived from \eqref{init1-eps}, \eqref{snol-ini} and \eqref{gp1}. We leave the details to the reader.

Let us calculate the value of the global pressure
function ${\mathsf P}$ on $\Gamma_{\rm inj}$. Since by condition
{\bf (A.4)} we have $P_{c}(1) = 0$, then, by \eqref{eq.pc},  $S^\ve=1$
 on $\Gamma_{\rm inj}$. Therefore, thanks to \eqref{gp1},
 the function ${\mathsf P}^\ve$ is equal to a constant on $\Gamma_{\rm inj}$.
We denote it by ${\mathsf P}^1$.

\section{Existence result and estimates of a solution.}
\label{for-exist-result}

The question of the existence of a solution to problem  \eqref{debut2-eps}--\eqref{theta-not-initi} has been studied in the previous works \cite{CG-MS4},
\cite{our-siam}. In the same works a number of important a priori estimates have been obtained.
For the reader convenience we formulate here the corresponding existence result and several estimates for the solution.

Denote
$H^1_{\Gamma_{\rm inj}}(\Omega) = \left\{u \in H^1(\Omega) \,:\,
u = 0 \,\, {\rm on}\,\, \Gamma_{\rm inj} \right\}$.
We equip it with the norm $\Vert u \Vert_{H^1_{\Gamma_{\rm inj}}(\Omega)} =
\Vert \nabla u \Vert_{(L^2(\Omega))^d}$. 
\\
From \eqref{fifi} and \eqref{tensork} if follows that almost surely
\begin{equation}\label{realiz2}
 \phi_- \leqslant \Phi^\ve(x) \leqslant \phi^+  \quad\hbox{and}\quad K_- |\xi|^2 \leqslant (K^\ve(x)\xi, \xi) \leqslant K^+ |\xi|^2 \,\,
\hbox{ for all } x\in \mathcal{Q}, \ \ \xi\in \mathbb{R}^d,
\end{equation}
with
$\Phi^\ve(x)=\boldsymbol{\Phi}(\mathcal{T}_\frac x\ve\omega)$ and
$K^\ve(x)= \boldsymbol{K}(\mathcal{T}_\frac x\ve\omega)$, see \eqref{reali}.  We call $\omega\in\Omega$ for which
\eqref{realiz2} holds {\it typical}. From now on without mentioning it again we assume that $\omega$ is typical.

\begin{definition}\label{def_sol}{\rm
A triple of function $p^\ve_{g}=p^\ve_{g}(x,t)$,
 $p^\ve_{w}= p^\ve_{w}(x,t)$ and  $S^\ve=S^\ve(x,t)$ is called a solution of problem \eqref{debut2-eps}--\eqref{theta-not-initi} if
 \begin{equation}
\label{mainres-1}
p^\ve_{w}, p^\ve_{g} \in L^2(\mathcal{Q}_T) \quad \hbox{and}
\quad
\sqrt{\lambda_w(S^\ve)}\, \nabla p^\ve_{w},\
\sqrt{\lambda_g(S^\ve)}\, \nabla p^\ve_{g} \in L^2(\mathcal{Q}_T);
\end{equation}
\begin{equation}
\label{mainres-4}
\beta(S^\ve) \in L^2(0, T; H^1(\mathcal{Q})) \quad \hbox{and} \quad
{\mathsf P^\ve} - {\mathsf P}^1 \in L^2(0, T; H^1_{\Gamma_{\rm inj}}(\mathcal{Q}));
\end{equation}
\begin{equation}
\label{mainres-2}
\Phi^\ve \frac{\partial S^\ve}{\partial t}
\in L^2(0, T; H^{-1}(\mathcal{Q})) \quad \hbox{and} \quad
\Phi^\ve \frac{\partial \Theta^\ve}{\partial t}
\in L^2(0, T; H^{-1}(\mathcal{Q}));
\end{equation}
the maximum principle is valid for $S^\ve$:\\[-6mm]
\begin{equation}
\label{mainres-3}
0 \leqslant S^\ve \leqslant 1 \quad \hbox{a.e. in }
\mathcal{Q}_T,\qquad S^\ve=1 \quad \hbox{on } \Gamma_{\rm inj};
\end{equation}
 for any $\varphi_w, \varphi_g \in C^1([0, T]; H^1(\mathcal{Q}))$
such that $\varphi_w = \varphi_g = 0$ on $\Gamma_{\rm inj} \times (0,T)$ and
$\varphi_w(x,T) = \varphi_g(x,T) = 0$, we have:
\begin{equation}
\label{wf-1}
\begin{array}{c}
\displaystyle
-\int\limits_{\mathcal{Q}_T} \Phi^\ve(x) S^\ve \frac{\partial \varphi_w}{\partial t}
\,dx dt - \int\limits_{\mathcal{Q}} \Phi^\ve(x) S^{\bf 0}(x)
\varphi_w(x, 0)\, dx
+
\int\limits_{\mathcal{Q}_T} K^\ve(x) \lambda_w(S^\ve) \nabla p^\ve_{w} \cdot \nabla \varphi_w\, dx dt
\\[4mm]
\displaystyle
-
\int\limits_{\mathcal{Q}_T} K^\ve(x) \lambda_w(S^\ve)\, \vec{g} \cdot \nabla \varphi_w\, dx dt = 0
\end{array}
\end{equation}
and
\begin{equation}
\label{gf-2}
\begin{array}{c}
\displaystyle
-\int\limits_{\mathcal{Q}_T} \Phi^\ve(x) \Theta^\ve \frac{\partial \varphi_g}{\partial t} \,dx dt -
\int\limits_{\mathcal{Q}} \Phi^\ve(x) \Theta^{\bf 0}(x) \varphi_g(x, 0) \, dx \\[4mm]
\displaystyle
+ \int\limits_{\mathcal{Q}_T} K^\ve(x) \lambda_g(S^\ve)
\varrho_g(p^\ve_{g}) \nabla p^\ve_{g} \cdot \nabla \varphi_g\,dx dt
-
\int\limits_{\mathcal{Q}_T} K^\ve(x) \lambda_g(S^\ve)
\left[\varrho_g(p^\ve_{g}) \right]^2 \vec{g}\cdot \nabla \varphi_g\, dx dt
= 0
\end{array}
\end{equation}
with $\Theta^\ve$ defined in (\eqref{theta-not-eps});\\[2mm]
the following relation holds
$$
P_{c}(S^\ve) = p^\ve_{g} - p^\ve_{w};
$$
The initial conditions are satisfied in the following sense: for any  $\psi \in H^1_{\Gamma_{\rm inj}}(\mathcal{Q})$
\begin{equation}
\label{mainres-6}
\lim\limits_{t\to0}\ \ \int\limits_\mathcal{Q} \Phi^\ve(x) S^\ve(x,t)\, \psi(x) \, dx =
\int\limits_\mathcal{Q} \Phi^\ve(x) S^{\bf 0}(x) \psi(x) \, dx
\end{equation}
and
\begin{equation}
\label{mainres-7}
\lim\limits_{t\to0}\ \ \int\limits_\mathcal{Q} \Phi^\ve(x) \Theta^\ve(x,t) \psi(x) \, dx  =
\int\limits_\mathcal{Q} \Phi^\ve(x) \Theta^{\bf 0}(x) \psi(x) \, dx
\end{equation}
with $S^{\bf 0}$ and $\Theta^{\bf 0}$ defined in \eqref{snol-ini} and \eqref{theta-not-initi}, respectively.
}
\end{definition}

\begin{remark}{\rm
As was shown in \cite{our-siam}
for any function $\psi \in H^1_{\Gamma_{\rm inj}}(\mathcal{Q})$
the integrals
$\int\limits_{\mathcal{Q}} \Phi^\ve(x) S^\ve(x, t) \psi(x) dx$ and
$\int\limits_{\mathcal{Q}} \Phi^\ve(x) \Theta^\ve(x, t) \psi(x)dx$
are continuous functions of $t$ on $[0, T]$. Thus, the limits in
\eqref{mainres-6} and  \eqref{mainres-7} are well defined.}
\end{remark}

The following result has been proved in \cite{CG-MS4},
\cite{our-siam}.
\begin{theorem}
\label{th-main-exist}
Under  assumptions {\bf (A.1)}-{\bf (A.9)} for any $\ve>0$ problem  \eqref{debut2-eps}--\eqref{theta-not-initi}
has a solution  $p^\ve_{w}=p^\ve_{w}(x,t)$,
 $p^\ve_{g}= p^\ve_{g}(x,t)$ and  $S^\ve=S^\ve(x,t)$ that satisfies Definition \ref{def_sol}.
\end{theorem}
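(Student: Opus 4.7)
The plan is to construct a solution of the degenerate system by regularizing both the mobilities and the capillary pressure, solving the resulting non-degenerate problem by a standard fixed-point or Galerkin procedure, and then passing to the limit in the regularization parameter via energy estimates and compactness. Since $\varepsilon$ here is fixed, $\Phi^\varepsilon$ and $K^\varepsilon$ are just bounded measurable coefficients satisfying \eqref{realiz2}, so the structural features of the problem that need to be handled are the vanishing of $\lambda_w$ at $s=0$, of $\lambda_g$ at $s=1$, the plateaus of $\varrho_g$, and the strong coupling between the two equations.

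First I would rewrite the system in the global-pressure/saturation variables $(\mathsf{P}^\varepsilon, S^\varepsilon)$ using the identities in Section~2.2. This is crucial because, by \eqref{gp-gam} and \eqref{bbb-2}, the total flux involves the non-degenerate coefficient $\lambda(s) \ge L_0$ (by \eqref{prop_lam_2}), while the individual degeneracies are absorbed into $\nabla\beta(S^\varepsilon)$. Next I would introduce a regularization parameter $\eta>0$, replacing $\lambda_w, \lambda_g$ by strictly positive versions $\lambda_w + \eta$, $\lambda_g + \eta$ and adding a small artificial diffusion $\eta \Delta S^\varepsilon$ in the saturation equation, together with a truncation of $P_c$ extended smoothly outside $[0,1]$. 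For each fixed $\eta$ the regularized system is quasilinear and non-degenerate, so existence of a weak solution $(p_w^{\ve,\eta}, p_g^{\ve,\eta}, S^{\ve,\eta})$ follows from a Schauder fixed-point argument: freeze $S$ in the coefficients, solve the two parabolic equations for the pressures, update $S$ via the capillary relation, and check the compactness of the map in a suitable space.

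The a priori estimates uniform in $\eta$ are the heart of the proof. Using $p_w^{\ve,\eta}$ as a test function in the first equation and a primitive of $1/\varrho_g$ applied to $p_g^{\ve,\eta}$ in the second, then summing, one exploits identity \eqref{gp6-new} to obtain
\begin{equation*}
\int_{\mathcal{Q}_T} \lambda(S^{\ve,\eta})|\nabla \mathsf{P}^{\ve,\eta}|^2 \, dx\,dt
+ \int_{\mathcal{Q}_T} |\nabla \mathfrak{b}(S^{\ve,\eta})|^2 \, dx\,dt \le C,
\end{equation*}
uniformly in $\eta$. Together with \eqref{prop_lam_2} this bounds $\mathsf{P}^{\ve,\eta}$ in $L^2(0,T;H^1)$, and via \eqref{bbb-1} controls $\beta(S^{\ve,\eta})$ in $L^2(0,T;H^1)$. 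A maximum principle argument on the regularized saturation equation, tested with $(S^{\ve,\eta}-1)^+$ and $(S^{\ve,\eta})^-$, yields $0 \le S^{\ve,\eta}\le 1$. Estimating the time derivatives by duality from the weak formulations gives \eqref{mainres-2}.

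Finally I would pass to the limit $\eta \to 0$. The uniform bounds on $\beta(S^{\ve,\eta})$ in $L^2(0,T;H^1)$ together with the $H^{-1}$ bound on $\Phi^\ve \partial_t S^{\ve,\eta}$, and the Hölder continuity of $\beta^{-1}$ from \textbf{(A.7)}, allow Aubin--Lions to deliver strong $L^2$ convergence of $S^{\ve,\eta}$ (and then of $P_c(S^{\ve,\eta})$, of $\lambda_w(S^{\ve,\eta})$, of $\lambda_g(S^{\ve,\eta})$ and of $\varrho_g(p_g^{\ve,\eta})$ wherever it matters). Strong convergence of $\Theta^{\ve,\eta}$ follows similarly from its $H^{-1}$ time-derivative bound. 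The delicate step, which I expect to be the main obstacle, is to identify the limits of the nonlinear flux terms $\lambda_w(S^{\ve,\eta})\nabla p_w^{\ve,\eta}$ and $\lambda_g(S^{\ve,\eta})\varrho_g(p_g^{\ve,\eta})\nabla p_g^{\ve,\eta}$, where the individual phase pressures lack uniform gradient estimates and $\varrho_g$ is degenerate on the set $\{S^\ve = 1\}$; this is handled by using \eqref{bbb-2} to rewrite these fluxes as $\lambda_w(S^{\ve,\eta})\nabla \mathsf{P}^{\ve,\eta} + \nabla\beta(S^{\ve,\eta})$ (and analogously for the gas phase), which involve only quantities with uniform $L^2$ gradient bounds, and then invoking a Minty-type monotonicity argument on the $\mathsf{P}^{\ve,\eta}$ term plus the strong convergence of $\beta(S^{\ve,\eta})$ to pass to the limit in the weak formulations \eqref{wf-1}--\eqref{gf-2}.
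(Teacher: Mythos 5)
The paper does not prove Theorem \ref{th-main-exist} at all: it is quoted from \cite{CG-MS4} and \cite{our-siam}, so there is no in-paper argument to compare against line by line. Your outline does, however, follow essentially the same route as those cited proofs: reformulation in the global pressure/saturation variables so that the total mobility $\lambda(s)\geqslant L_0$ is non-degenerate, regularization of the mobilities plus an artificial dissipation, Schauder/Galerkin for the regularized problem, the energy identity \eqref{gp6-new} tested with $p_w$ and a primitive of $1/\varrho_g$ composed with $p_g$ to get the uniform bounds on $\nabla\mathsf{P}^{\ve,\eta}$ and $\nabla\mathfrak{b}(S^{\ve,\eta})$, the maximum principle for $S$, and compactness of $S^{\ve,\eta}$ via the $H^1$ bound on $\beta(S^{\ve,\eta})$, the $H^{-1}$ bound on $\Phi^\ve\partial_t S^{\ve,\eta}$ and the H\"older continuity of $\beta^{-1}$ from {\bf (A.7)}. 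Two remarks on the final limit passage. First, no Minty argument is needed for the fluxes: after rewriting them via \eqref{bbb-2} as $\lambda_w(S^{\ve,\eta})\nabla\mathsf{P}^{\ve,\eta}+\nabla\beta(S^{\ve,\eta})$, each term is a product of a bounded, a.e.\ convergent factor with a weakly convergent gradient, so weak convergence of the product to the product of the limits is immediate; the genuinely delicate point is the set $\{S=1\}$, where $p_g$ carries no information and $\varrho_g(p_g^{\ve,\eta})$ need not converge, and there one argues instead that the whole gas flux tends to zero because $\lambda_g(1)=0$ kills $\lambda_g(S^{\ve,\eta})\nabla\mathsf{P}^{\ve,\eta}$ and \eqref{bbb-1} kills $\nabla\beta(S^{\ve,\eta})$ on that set (this is exactly the splitting used in the paper's own two-scale lemma in Section 4.3). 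Second, the strong $L^2$ compactness of $\Theta^{\ve,\eta}$ does not follow from the time-derivative bound alone, since no spatial gradient bound on $p_g^{\ve,\eta}$ is available; one has to combine the $H^{-1}$ bound on $\Phi^\ve\partial_t\Theta^{\ve,\eta}$ with translation estimates in space obtained through the decomposition $p_g=\mathsf{P}+\mathsf{G}_g(S)$ (Lemma 4.2 of \cite{our-siam}). With these two points made precise your plan is a faithful reconstruction of the existence proof the paper relies on.
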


Below we also formulate several estimates for a solution of \eqref{debut2-eps}--\eqref{theta-not-initi} that have been obtained in \cite{our-siam}, \cite{app-nhm17}.

\begin{theorem}
\label{th-estima}
Let $p^\ve_{w}$, $p^\ve_{g}$, $S^\ve$ be a solution of problem  \eqref{debut2-eps},
and assume that the global pressure ${\mathsf P}^\ve$ is given by (\ref{gp1}).
Then
\begin{equation}
\label{hhh8-ved}
\int\limits_{\mathcal{Q}_T} \Big\{
\lambda_w(S^\ve) |\nabla p^\ve_{w}|^2 + \lambda_g(S^\ve) |\nabla p^\ve_{g}|^2 \Big\} \, dxdt
\leqslant C,
\end{equation}
\begin{equation}
\label{hhh9-ved}
\int\limits_{\mathcal{Q}_{T}} \Big\{ |\nabla {\mathsf P}^\ve|^2 +
|\nabla \beta(S^\ve)|^2 + \left|\nabla \mathfrak{b}(S^\ve) \right|^2  \Big\} \, dxdt \leqslant C,
\end{equation}
\begin{equation}
\label{nouv-100-ved}
\Vert \partial_t (\Phi^\ve \Theta^\ve) \Vert_{L^2(0,T;H^{-1}(\mathcal{Q}))}
+ \Vert \partial_t (\Phi^\ve S^\ve) \Vert_{L^2(0,T;H^{-1}(\mathcal{Q}))}
\leqslant C;
\end{equation}
here the constant $C$ is deterministic and does not depend on $\ve$.
\end{theorem}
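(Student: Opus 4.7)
The plan is to derive all three estimates from a single energy identity, passing through the global-pressure reformulation of Section~\ref{gp-relat}. I would first test the weak forms \eqref{wf-1} and \eqref{gf-2} against the pressure-type functions $\varphi_w = p_w^\ve$ and $\varphi_g = \mathcal{G}(p_g^\ve)$ with $\mathcal{G}'(p)=1/\varrho_g(p)$, both of which vanish on $\Gamma_{\rm inj}\times(0,T)$ thanks to \eqref{bc3-eps}, and sum. Using $P_c(S^\ve)=p_g^\ve-p_w^\ve$ and \eqref{theta-not-eps}, the parabolic time-derivative contributions $\Phi^\ve\partial_tS^\ve\cdot p_w^\ve+\Phi^\ve\partial_t\Theta^\ve\cdot\mathcal{G}(p_g^\ve)$ reassemble (by a short primitive computation) into an exact time derivative $\frac{d}{dt}\int_\mathcal{Q}\Phi^\ve\,\mathcal{E}(S^\ve,p_g^\ve)\,dx$ of a free-energy density whose initial value is controlled by assumption~(A.8) and which is bounded below uniformly in $S\in[0,1]$ and $p_g$. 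The flux terms collapse to $\int_{\mathcal{Q}_T} K^\ve(\lambda_w|\nabla p_w^\ve|^2+\lambda_g|\nabla p_g^\ve|^2)\,dxdt$ which, by the global-pressure identity \eqref{gp6-new}, equals $\int_{\mathcal{Q}_T} K^\ve(\lambda(S^\ve)|\nabla\mathsf{P}^\ve|^2+|\nabla\mathfrak{b}(S^\ve)|^2)\,dxdt$, while the gravity cross-terms are absorbed by Cauchy--Schwarz and Young's inequality into half of this dissipation.

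The lower bounds $K^\ve\geq K_-$ from \eqref{realiz2} and $\lambda(s)\geq L_0$ from assumption~(A.5) turn the dissipation into a uniform $L^2(\mathcal{Q}_T)$ control on $\nabla\mathsf{P}^\ve$ and $\nabla\mathfrak{b}(S^\ve)$; combined with the pointwise relation \eqref{bbb-1} this also yields the bound on $\nabla\beta(S^\ve)$, hence \eqref{hhh9-ved}. Estimate \eqref{hhh8-ved} is then an immediate consequence of \eqref{gp6-new} and $K^\ve\leq K^+$. For \eqref{nouv-100-ved} one reads the PDEs as distributional identities
\[
\Phi^\ve\partial_tS^\ve=\mathrm{div}\bigl\{K^\ve\lambda_w(S^\ve)(\nabla p_w^\ve-\vec g)\bigr\},\qquad \Phi^\ve\partial_t\Theta^\ve=\mathrm{div}\bigl\{K^\ve\lambda_g(S^\ve)\varrho_g(p_g^\ve)(\nabla p_g^\ve-\varrho_g(p_g^\ve)\vec g)\bigr\},
\]
and observes that $\lambda_w,\lambda_g\leq 1$, $\varrho_{\min}\leq\varrho_g\leq\varrho_{\max}$, $K^\ve\leq K^+$, together with \eqref{hhh8-ved}, force the vector fields inside the divergences to be bounded in $L^2(\mathcal{Q}_T)^d$; for instance $\|\lambda_w(S^\ve)\nabla p_w^\ve\|_{L^2}^2\leq \int\lambda_w(S^\ve)|\nabla p_w^\ve|^2\leq C$. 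The continuity of $\mathrm{div}:L^2(\mathcal{Q})^d\to H^{-1}(\mathcal{Q})$ then gives \eqref{nouv-100-ved}.

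The main obstacle is rigorously justifying the energy identity: the formal test functions $p_w^\ve,\mathcal{G}(p_g^\ve)$ lie only in $L^2(\mathcal{Q}_T)$ and not in $L^2(0,T;H^1_{\Gamma_{\rm inj}}(\mathcal{Q}))$, so they are not directly admissible in Definition~\ref{def_sol}. I would circumvent this either by a Steklov time-averaging combined with truncation $T_M(p_w^\ve)$, $T_M(p_g^\ve)$ in the pressures, followed by a monotone passage $M\to\infty$, or, more cleanly, by working exclusively in the admissible global-pressure variables $\mathsf{P}^\ve-\mathsf{P}^1\in L^2(0,T;H^1_{\Gamma_{\rm inj}}(\mathcal{Q}))$ and $\beta(S^\ve)\in L^2(0,T;H^1(\mathcal{Q}))$ provided by \eqref{mainres-4}, reconstructing the energy balance via \eqref{gp-gam} and \eqref{gp+grad}. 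Since the deterministic bounds in assumptions~(A.1)--(A.9) propagate through each step of the argument, the resulting constant $C$ depends neither on $\ve$ nor on the typical realization $\omega$, which is exactly the form claimed.
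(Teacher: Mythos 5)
The paper does not prove Theorem~\ref{th-estima} itself; it imports the estimates from \cite{our-siam} and \cite{app-nhm17}, where they are derived exactly along the lines you describe: testing with $p^\ve_w$ and a primitive of $1/\varrho_g$ evaluated at $p^\ve_g$, reassembling the time-derivative terms into a free-energy functional, converting the dissipation via \eqref{gp6-new} and \eqref{bbb-1}, and reading off \eqref{nouv-100-ved} from the equations with $\lambda_w,\lambda_g\leqslant 1$ and the boundedness of $\varrho_g$. Your sketch is correct, correctly flags the admissibility of the nonlinear test functions as the one genuinely technical point, and matches the source proof in all essentials.
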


\section{Homogenization result}
\label{sec_hm}
In this section we first remind the notion of stochastic two-scale convergence. We use here the realization-wise version
of this convergence that was introduced in \cite{PiZh2006}.
Then we provide several compactness results for a solution of problem \eqref{debut2-eps}--\eqref{theta-not-initi}.
After that we calculate the homogenized coefficients and formulate the homogenization theorem.
The proof of this theorem is given in the next section.

\subsection{Stochastic two-scale convergence. Compactness results.}
\label{ss_stoch2scale}
Changing if necessary the probability space we may assume that $\Omega$ is a compact metric space, $\mathcal{F}$ its Borel
$\sigma$-algebra and the dynamical system $T_x$ is continuous.
We give a definition of stochastic two-scale convergence that is adapted to our framework.

\begin{definition}\label{def_s2sc}
We say that a family of $L^2(\mathcal{Q}_T)$ functions $u^\ve=u^\ve_{\tilde\omega}(x,t)$ stochastically two-scale converges
to a function $u_{\tilde\omega}^0(x,t,\omega)$ if the following two conditions are fulfilled:
\begin{itemize}
  \item There exists $\ve_0>0$ such that
  $$
  \|u^\ve\|_{L^2(\mathcal{Q}_T)}\leq C_{\tilde\omega}  \quad\hbox{for all }\ve<\ve_0;
  $$
  \item Almost surely (for almost all  $\tilde\omega\in\Omega$) for any $\varphi\in C^\infty(\mathcal{Q}_T)$ and  any $\psi\in C(\Omega)$
  we have
\begin{equation}\label{ma_2s}
  \int\limits_{\mathcal{Q}_T} u_{\tilde\omega}
  ^\eps(x,t)\varphi(x,t)\psi(\mathcal{T}_\frac x\ve\tilde\omega)\,dxdt\longrightarrow
  \int\limits_{\mathcal{Q}_T}  u^0(x,t,\omega)\varphi(x,t)\psi(\omega)\,dxdtd\mathbf{P}(\omega).
\end{equation}
\end{itemize}
 \end{definition}

As was shown in \cite[Lemma 5.1]{PiZh2006}   for any function $\psi\in L^2(\Omega)$ there is its modification
(that is a function that differs from $\psi$ on the set of zero measure $\mathbf{P}$) such that relation  \eqref{ma_2s}
holds true. In what follows we consider this particular modification of  functions from $L^2(\Omega)$.

In order to formulate the main properties of stochastic two-scale convergence we introduce the subspaces $L^2_\mathrm{pot}(\Omega)$ and $L^2_\mathrm{sol}(\Omega)$ in the standard way, see \cite[Chapter x]{JKO94}.
Let $U_x$, $x\in\mathbb R^d$, be a strongly continuous group of unitary operators in $L^2(\Omega)$ defined by
$U_xf(\omega)=f(\mathcal{T}_x\omega)$. The generator of this group along the $j$th coordinate direction is denoted
by $\partial_j$ and its domain by $\mathcal{D}_j$. The set $\mathcal{D}=\bigcap_{j=1}^d\mathcal{D}_j$ is dense
in $L^2(\Omega)$.  Letting $\nabla_\omega u(\omega)=\big(\partial_1u(\omega),\ldots,\partial_d u(\omega)\big)$
for $u\in \mathcal{D}$ we denote by $L^2_\mathrm{pot}(\Omega)$ the closure of the set
$\{\nabla_\omega u\,:\, u\in\mathcal{D} \}$  in $(L^2(\Omega))^d$. The subspace $L^2_\mathrm{sol}(\Omega)$ is defined
as the closure in $(L^2(\Omega))^d$ of the set of vector function $(v_1(\omega),\ldots, v_d(\omega))$ such that
$v_j\in \mathcal{D}_j$, $j=1,\ldots, d$, and $\sum_1^d \partial_j v_j=0$.

The subspaces $L^2_\mathrm{pot}(\Omega)$ and  $L^2_\mathrm{sol}(\Omega)$ are orthogonal in $(L^2(\Omega))^d$,
and $(L^2(\Omega))^d=L^2_\mathrm{pot}(\Omega)\oplus L^2_\mathrm{sol}(\Omega)$. See, for instance, \cite{JKO94}
for further details.

Some properties of the stochastic two-scale convergence are collected in the following statement.
\begin{theorem}\label{t_stoch2s_prop}
For any family $u^\ve=u^\ve_{\tilde\omega}(x,t)$ such that $\|u^\ve\|_{L^2(\mathcal{Q}_T)}\leq C(\tilde\omega)$
there exists a sequence $\ve_k\to0$ and a function $u^0\in L^2(\mathcal{Q}_T\times \Omega)$ such that $u^{\ve_k}$
stochastically two-scale converges to $u^0$, as $k\to\infty$.\\[2mm]
If
$$
\|u^\ve\|_{L^2(\mathcal{Q}_T)}+\|\nabla_xu^\ve\|_{L^2(\mathcal{Q}_T)}\leq C(\tilde\omega),
$$
then $u^0$ does not depend on $\omega$,  $u^0\in L^2(0,T; H^1(\mathcal{Q}))$, and there exists a function
$u^1\in L^2(\mathcal{Q}_T;L^2_\mathrm{pot}(\Omega))$ such that
$$
\nabla_x u^\ve \mathop{\longrightarrow}\limits^{s2s}\nabla_x u^0 + u^1;
$$
here and later on symbol $\mathop{\longrightarrow}\limits^{s2s}$ denotes stochastic two-scale convergence.\\[2mm]
If
$$
\|u^\ve\|_{L^2(\mathcal{Q}_T)}+\ve\|\nabla_xu^\ve\|_{L^2(\mathcal{Q}_T)}\leq C(\tilde\omega),
$$
then
$$
\ve \nabla_xu^\ve  \mathop{\longrightarrow}\limits^{s2s} \nabla_\omega u^0(x,t,\omega).
$$
\end{theorem}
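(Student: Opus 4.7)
The plan follows the realization-wise approach of \cite{PiZh2006}, based on two ingredients: Birkhoff's ergodic theorem, which for typical $\tilde\omega$ and $\psi\in L^2(\Omega)$ in a suitable modification class provides weak $L^2_{\mathrm{loc}}$ convergence $\psi(\mathcal{T}_{x/\ve}\tilde\omega)\rightharpoonup\int_\Omega\psi\,d\mathbf{P}$; and the orthogonal decomposition $(L^2(\Omega))^d = L^2_{\mathrm{pot}}(\Omega) \oplus L^2_{\mathrm{sol}}(\Omega)$.

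For the compactness assertion, I fix a typical $\tilde\omega$ and view the left-hand side of \eqref{ma_2s} as a bilinear form on $C^\infty(\mathcal{Q}_T)\times C(\Omega)$. Cauchy-Schwarz and the ergodic theorem give a uniform bound $C(\tilde\omega)\|\varphi\|_{L^2(\mathcal{Q}_T)}\|\psi\|_{L^2(\Omega)}$. A diagonal extraction on a countable dense set of test pairs yields a subsequence along which this bilinear form converges; its limit extends by density to a bounded bilinear form on $L^2(\mathcal{Q}_T)\times L^2(\Omega)$ and, via Riesz representation, defines $u^0\in L^2(\mathcal{Q}_T\times\Omega)$.

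For the gradient case, applying the first step to both $u^\ve$ and $\nabla_x u^\ve$ provides limits $u^0$ and $v^0$. To see that $u^0$ is $\omega$-independent, I test $u^\ve$ against $\nabla_x[\ve\varphi(x,t)\eta(\mathcal{T}_{x/\ve}\tilde\omega)]$ for $\eta\in\mathcal{D}$: integration by parts makes the left-hand side vanish, while the leading term on the right yields $\int u^0\,\varphi\,\partial_j\eta\,dx\,dt\,d\mathbf{P}=0$, so $u^0(x,t,\cdot)$ is $\mathcal{T}_x$-invariant and hence a.s.\ constant by ergodicity. To identify $v^0$, I test $\nabla_x u^\ve$ against $\varphi(x,t)\eta(\mathcal{T}_{x/\ve}\tilde\omega)$ with $\eta$ in a dense subset of $L^2_{\mathrm{sol}}(\Omega)$ and integrate by parts in $x$. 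Taking $\eta$ with zero mean first shows $v^0(x,t,\cdot) - \langle v^0(x,t,\cdot)\rangle \in L^2_{\mathrm{pot}}(\Omega)$ for almost every $(x,t)$; taking $\eta$ constant then identifies $\langle v^0\rangle = \nabla_x u^0$ distributionally, upgrading $u^0$ to $L^2(0,T;H^1(\mathcal{Q}))$. Setting $u^1:=v^0-\nabla_x u^0$ completes the claim.

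The rescaled-gradient case is similar but simpler: the first step applied componentwise to $\ve\nabla_x u^\ve$ produces a limit $w^0$. Testing against $\varphi(x,t)\eta(\mathcal{T}_{x/\ve}\tilde\omega)$ and integrating by parts yields two terms, one of order $\ve$ which vanishes, leaving $\int w^0\cdot\varphi\,\eta\,dx\,dt\,d\mathbf{P} = -\int u^0\,\varphi\,\mathrm{div}_\omega\eta\,dx\,dt\,d\mathbf{P}$, which identifies $w^0 = \nabla_\omega u^0$. The main technical obstacle is the almost-sure validity of the ergodic theorem \emph{simultaneously} for all relevant oscillating test functions: it is resolved by separability of $C^\infty(\mathcal{Q}_T)$ and $C(\Omega)$, applying Birkhoff on a countable dense subset on a single full-measure event and then extending by continuity. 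The subtle point, handled in \cite[Lemma 5.1]{PiZh2006}, is the selection of representatives of $L^2(\Omega)$ functions for which the ergodic averaging behaves correctly; once this platform is in place, the remaining steps reduce to careful bookkeeping of integrations by parts against oscillating test functions.
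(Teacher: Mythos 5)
Your proposal is correct and follows exactly the realization-wise scheme of Zhikov--Piatnitski that the paper itself relies on: the paper gives no proof of Theorem \ref{t_stoch2s_prop}, stating only that ``the proof of these statements can be found in \cite{PiZh2006}.'' Your reconstruction (diagonal extraction plus Riesz representation for compactness; oscillating test functions of the form $\ve\varphi\,\eta(\mathcal{T}_{x/\ve}\tilde\omega)$ and the orthogonal decomposition $(L^2(\Omega))^d=L^2_{\mathrm{pot}}(\Omega)\oplus L^2_{\mathrm{sol}}(\Omega)$ for the gradient identifications) is precisely the argument of that reference, and you correctly isolate the two genuine technical points --- the simultaneous a.s.\ validity of Birkhoff's theorem over a countable dense family of test functions and the choice of modifications handled in \cite[Lemma 5.1]{PiZh2006}.
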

\noindent
The proof of these statements can be found in \cite{PiZh2006}.\\[2mm]
We turn to the properties of solutions of problem \eqref{debut2-eps}--\eqref{theta-not-initi}.
\begin{theorem}
\label{thm_comp}
Let $S^\ve$, $p_w^\ve$ and $p_g^\ve$ be a solution of problem \eqref{debut2-eps}--\eqref{theta-not-initi}, and assume that conditions {\bf (A.1)}--{\bf (A.9)} are fulfilled. Then there exist a function $\widehat S=\widehat S(x,t)$, $0\leqslant S\leqslant 1$, a function
$\widehat{\mathsf P}\in L^2(0,T;H^1(\mathcal{Q}))$ and a function $\widehat\Theta\in L^{\infty}(\mathcal{Q}_T)$ such that, for a subsequence, as $\eps\to0$,
\begin{equation}
\label{conve_1}
S^\ve(x, t) \to \widehat S(x, t) \quad \hbox{\rm in } L^q(\mathcal{Q}_T)\quad
\hbox{\rm for all } q\in[1, +\infty);
\end{equation}
\begin{equation}
\label{conve_2}
{\mathsf P}^\ve(x, t) \rightharpoonup \widehat{\mathsf P}(x, t) \,\, {\rm weakly\,\, in}\,\, L^2(0, T; H^1(\mathcal{Q}));
\end{equation}
\begin{equation}
\label{conve_3}
\Theta^\ve \to \widehat\Theta  \quad \hbox{\rm in }
L^2(\mathcal{Q}_T).
\end{equation}
Moreover, 
$\widehat\Theta = (1 -\widehat S)\, \varrho_{g}(P_{g})$ with $P_{g}=\widehat{\mathsf P}+G_g(\widehat S)$.
\end{theorem}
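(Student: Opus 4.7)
The plan is to extract three compactness limits and then identify the formula for $\widehat\Theta$, combining the a priori bounds of Theorem~\ref{th-estima} with nonlinear compactness arguments tuned to the degeneracies. All steps are performed realization-wise for a typical $\omega$; the stochastic two-scale machinery is needed only later, for passing to the limit in the equations. The easiest item is \eqref{conve_2}: by \eqref{mainres-4} one has $({\mathsf P}^\ve - {\mathsf P}^1)\in L^2(0,T;H^1_{\Gamma_{\rm inj}}(\mathcal{Q}))$, and \eqref{hhh9-ved} gives $\|\nabla {\mathsf P}^\ve\|_{L^2}\leq C$. The Poincar\'e inequality on $H^1_{\Gamma_{\rm inj}}$ yields a uniform bound in $L^2(0,T;H^1(\mathcal{Q}))$, and weak compactness produces $\widehat{\mathsf P}$ along a subsequence.

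For \eqref{conve_1}, I would first prove relative compactness of $\beta(S^\ve)$ in $L^2(\mathcal{Q}_T)$ via the Kolmogorov--Riesz--Fr\'echet criterion and then transfer it to $S^\ve$ through assumption \textbf{(A.7)}. Spatial translations of $\beta(S^\ve)$ are controlled by $\nabla \beta(S^\ve)\in L^2$ from \eqref{hhh9-ved}. For time translations of step $h$, \eqref{nouv-100-ved} together with the fundamental theorem of calculus give $\|\tau_h(\Phi^\ve S^\ve) - \Phi^\ve S^\ve\|_{L^2(0,T-h;H^{-1})} \leq Ch$. Pairing this with $\tau_h\beta(S^\ve) - \beta(S^\ve)$, which vanishes on $\Gamma_{\rm inj}$ (since $S^\ve = 1$ there) and is bounded in $L^2(0,T-h;H^1)$, yields
\[
\int_0^{T-h}\!\!\int_{\mathcal{Q}}\Phi^\ve\bigl(S^\ve(t+h)-S^\ve(t)\bigr)\bigl(\beta(S^\ve(t+h))-\beta(S^\ve(t))\bigr)\,dx\,dt \leq Ch.
\]
Since $\beta$ is monotone and Lipschitz with constant $\|\alpha\|_\infty$, the elementary inequality $|\beta(a)-\beta(b)|^2 \leq \|\alpha\|_\infty(a-b)(\beta(a)-\beta(b))$ combined with $\Phi^\ve\geq\phi_->0$ converts this into $\|\tau_h\beta(S^\ve) - \beta(S^\ve)\|^2_{L^2(\mathcal{Q}_{T-h})}\leq C'h$. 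Then \textbf{(A.7)} transfers strong convergence to $S^\ve$ in $L^{2\theta}(\mathcal{Q}_T)$, and $0\leq S^\ve\leq 1$ upgrades this to every $L^q(\mathcal{Q}_T)$, $q<\infty$.

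For \eqref{conve_3} and the identification, I would first secure $L^2$-compactness of $\Theta^\ve$ by an Aubin--Lions-type argument. Time regularity is \eqref{nouv-100-ved}, while spatial regularity is obtained by introducing a Kirchhoff-type potential $\mathcal{H}(s,p)$ that absorbs the degenerate weight $\sqrt{\lambda_g(s)}$ into the pressure variable, so that $\nabla\mathcal{H}(S^\ve,p^\ve_g)$ is controlled by combining \eqref{hhh8-ved} with $\nabla\beta(S^\ve)\in L^2$. To identify the limit, continuity of $G_g$ gives $G_g(S^\ve)\to G_g(\widehat S)$ strongly, hence $p^\ve_g={\mathsf P}^\ve+G_g(S^\ve)\rightharpoonup P_g:=\widehat{\mathsf P}+G_g(\widehat S)$ weakly in $L^2$. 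On $\{\widehat S = 1\}$ both $\widehat\Theta$ and $(1-\widehat S)\varrho_g(P_g)$ vanish by the strong $L^q$ convergence of $S^\ve$ and boundedness of $\varrho_g$. On $\{\widehat S<1\}$, Egorov's theorem combined with the strong convergence of $S^\ve$ localizes the analysis to sets where $1-S^\ve\geq\delta>0$; there $\lambda_g(S^\ve)$ is bounded below, so $\nabla p^\ve_g\in L^2$ and Rellich's theorem gives strong convergence of $p^\ve_g$ and hence of $\varrho_g(p^\ve_g)$. Letting $\delta\to 0$ yields $\widehat\Theta = (1-\widehat S)\varrho_g(P_g)$.

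The main obstacle is Step~3: constructing the correct auxiliary variable to secure spatial regularity of $\Theta^\ve$ in the presence of the double degeneracy ($\lambda_g(1)=0$ together with $\varrho_g$ being constant outside $[p_{\min},p_{\max}]$), and passing to the limit in the nonlinear composition $\varrho_g(p^\ve_g)$ where $p^\ve_g$ only converges weakly. Steps~1 and~2 are essentially classical, but Step~3 requires the delicate truncation-plus-Egorov device above.
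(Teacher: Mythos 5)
Your handling of \eqref{conve_2} and \eqref{conve_1} is sound, and it essentially reconstructs what the paper itself only cites: the paper's proof of Theorem \ref{thm_comp} invokes Birkhoff's ergodic theorem and then refers to Lemma 4.2, Remark 1 and Lemma 4.8 of \cite{our-siam} for the compactness of $S^\ve$ and $\Theta^\ve$ and for the identification of $\widehat\Theta$. Your translation-estimate argument for $\beta(S^\ve)$ (spatial shifts via \eqref{hhh9-ved}, temporal shifts via pairing $\tau_h(\Phi^\ve S^\ve)-\Phi^\ve S^\ve$ against $\tau_h\beta(S^\ve)-\beta(S^\ve)$, the monotonicity inequality, then {\bf (A.7)}) is a correct and standard way to fill in that citation, and the Poincar\'e argument for \eqref{conve_2} is fine.

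The genuine gap is in your Step 3, in two places. First, the $L^2$-compactness of $\Theta^\ve$ is asserted rather than proved: the ``Kirchhoff-type potential $\mathcal{H}(s,p)$'' is never constructed, and it is doubtful any such device gives spatial regularity of $\Theta^\ve$, because $\nabla\Theta^\ve$ contains the term $\varrho_g(p^\ve_g)\nabla S^\ve$ and $\nabla S^\ve$ is not controlled by the available estimates --- only $\nabla\beta(S^\ve)$ is bounded in $L^2$, and $\beta'=\alpha$ degenerates at $s=0$ and $s=1$. What is needed is a translation estimate for $\Theta^\ve$ in the spirit of your Step 2, exploiting \eqref{nouv-100-ved} together with the structure $\Theta^\ve=\varrho_g({\mathsf P}^\ve+{\mathsf G}_g(S^\ve))(1-S^\ve)$ and the already established strong convergence of $S^\ve$; this is precisely the content of the cited Lemma 4.2/Remark 1. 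Second, your identification argument via Egorov plus Rellich does not work as stated: the sets $\{1-S^\ve\geqslant\delta\}$ are merely measurable and $\ve$-dependent, so they carry no open-set $H^1$ structure on which Rellich could be applied, and \eqref{hhh8-ved} controls only $\sqrt{\lambda_g(S^\ve)}\,\nabla p^\ve_g$, which does not yield an $H^1$ bound for $p^\ve_g$ on any fixed open subset. Since ${\mathsf P}^\ve$ has no time-compactness, $p^\ve_g={\mathsf P}^\ve+{\mathsf G}_g(S^\ve)$ converges only weakly, and the nonlinear composition $\varrho_g(p^\ve_g)$ cannot be passed to the limit by this route. The standard repair --- and what Lemma 4.8 of \cite{our-siam} does --- is to use the strong $L^2$ convergence of $\Theta^\ve$ itself: on $\{\widehat S<1\}$ one obtains a.e.\ convergence of $\varrho_g(p^\ve_g)=\Theta^\ve/(1-S^\ve)$, and a Minty-type monotonicity argument (using that $\varrho_g$ is increasing and bounded and that $p^\ve_g\rightharpoonup P_g$) identifies the a.e.\ limit with $\varrho_g(P_g)$; on $\{\widehat S=1\}$ both sides vanish, as you correctly note.
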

\begin{remark}
The statement of the  latter theorem holds for any typical realization $\tilde\omega$.
However, the choice of a convergent subsequence as well as the limit functions $\widehat S$, $ \widehat{\mathsf P}$
and $\widehat\Theta$ might depend on $\tilde\omega$.
\end{remark}
As an immediate consequence of \eqref{conve_1} we have
\begin{equation}
\label{conve_4}
\beta(S^\ve) \to \beta(\widehat S) \quad \hbox{ in } L^q(\Omega_T)\quad
\hbox{for all } q \in[1, +\infty).
\end{equation}
\begin{proof}[Proof of Theorem \ref{thm_comp}]
By the Birkhoff ergodic theorem almost surely the functions $\Phi^\ve$ converges weakly in $L^2(\mathcal{Q}_T)$
to a constant equal to $\mathbf{E}\boldsymbol{\Phi}=\int_\Omega\boldsymbol{\Phi}(\omega)\,d\mathbf{P}(\omega)$.
Then  according to Lemma 4.2 and Remark 1 in \cite{our-siam} the families $\{S^\ve \}_{\ve>0}$ and $\{\Theta^\ve\}_{\ve>0}$
are compact in   $L^2(\mathcal{Q}_T)$. This implies the desired convergence in \eqref{conve_1} and \eqref{conve_3}.
The convergence in \eqref{conve_2} is an immediate consequence of estimate \eqref{hhh9-ved}.\\
The relation $\widehat\Theta = (1 -\widehat S)\, \varrho_{g}(P_{g})$
has been  justified  in Lemma 4.8 in \cite{our-siam}.
\end{proof}


\subsection{Effective system and homogenization theorem}
\label{hom-res}

We begin this section by considering an auxiliary problem that reads:\\
given a vector $\eta\in\mathbb R^d$ find $\boldsymbol{\xi}_\eta\in L^2_{\mathrm{pot}}(\Omega)$ such that
$$
\boldsymbol{K}(\boldsymbol{\xi}_\eta+\eta)\in L^2_{\mathrm{sol}}(\Omega).
$$
This problem has a unique solution, see \cite[Chapter x.x]{JKO94}.
If $\eta$ is equal to the $j$-th coordinate vector $e_j$ in $\mathbb R^d$, we denote the corresponding solution by
$ \boldsymbol{\xi}_j$.
\\[2mm]
Let $ \boldsymbol{\xi}=\boldsymbol{\xi}(\omega)$ be a matrix valued function whose $j$-th column coincides
with $ \boldsymbol{\xi}_j$, $j=1,\ldots, d$.
We define the effective characteristics
$$
K^\mathrm{hom}=\int_\Omega \boldsymbol{K}(\omega)\big(\boldsymbol{\xi}(\omega)+\mathbf{I}\big)\,d\mathbf{P}(\omega),
\quad
\Phi^\mathrm{hom}=\int_\Omega \boldsymbol{\Phi}(\omega)\,d\mathbf{P}(\omega);
$$
here the symbol $\mathbf{I}$ stands for the unit matrix.   \\
The homogenized system takes the form
\begin{equation}
\label{hom-0}
\left\{
\begin{array}[c]{ll}
\displaystyle
\Phi^\mathrm{hom} \, \dfrac{\partial \widehat{S}}{\partial t} -
{\rm div}_x \Big\{K^\mathrm{hom}\, \lambda_{w}(\widehat{S}) \big[ \nabla P_{w} - \vec{g}\big] \Big\}
= 0 \quad \hbox{in }  \mathcal{Q}_T; \\[4mm]
\displaystyle
\Phi^\mathrm{hom} \, \dfrac{\partial \widehat{\Theta}}{\partial t} -
{\rm div}_x \bigg\{K^\mathrm{hom}\, \varrho_{g}(P_{g})\,\lambda_{g}(\widehat{S}) \big[
\nabla P_{g} - \varrho_{g}(P_{g}) \vec{g}\big] \bigg\} = 0
\quad {\rm in} \,\, \mathcal{Q}_T;\\[4mm]
P_c(\widehat{S}) = P_g - P_{w} \quad \hbox{in }  \mathcal{Q}_T,\\[4mm]
\widehat{\Theta}=(1 - \widehat{S})\, \varrho_{g}(P_{g}) \quad \hbox{in }  \mathcal{Q}_T,\\[4mm]
0 \leqslant \widehat{S} \leqslant 1 \quad {\rm in} \,\, \mathcal{Q}_T.
\end{array}
\right.
\end{equation}
{$\mathtt {Boundary\ conditions.}$}
\begin{equation}
\label{hom-5}
\left\{
\begin{array}[c]{ll}
P_{g}(x, t) = P_{w}(x, t) = 0 \quad \hbox{on } \Gamma_{\rm inj} \times (0,T), \\[2mm]
\widehat{\vec q}_{w} \cdot \vec \nu = \widehat{\vec q}_{g} \cdot \vec \nu = 0 \quad \hbox{on }
\Gamma_{\rm imp} \times (0,T);\\
\end{array}
\right.
\end{equation}
where the velocities $\widehat{\vec q}_{w},\,\widehat{\vec q}_{g}$ are defined by
\begin{equation}
\widehat{\vec q}_{w} = - {K}^{\mathrm{hom}} \lambda_{w}(\widehat{S})
\bigg(\nabla P_{w} - \vec{g}\bigg) \quad
\hbox{and} \quad \widehat{\vec q}_{g}= - {K}^{\mathrm{hom}} \lambda_{g}(\widehat{S})
\Big(\nabla P_{g} - \varrho_g(P_{g}) \vec{g}\Big).
\label{hom-6}
\end{equation}

{$\mathtt {Initial \ conditions.}$} The initial conditions are the same as for the original system in \eqref{init1-eps}. Namely,
\begin{equation}
\label{init1-hom}
P_{w}(x, 0) = p_{w}^{\bf 0}(x) \quad \hbox{and} \quad
P_{g}(x, 0) = p_{g}^{\bf 0}(x) \quad \hbox{in }  \mathcal{Q}.
\end{equation}

Observe that the limit problem is deterministic.
The functions $\widehat{S}$, $P_{w}$ and $P_g$ represent the homogenized wetting phase saturation, wetting phase pressure and non-wetting phase pressure, respectively.

\begin{theorem}
\label{main-hom-theor}
Assume that conditions {\bf (A.1})--{\bf (A.9)} hold. Then almost surely, a solution $\big(S^\ve, \,p^\ve_w, \, p^\ve_g\big)$
of problem \eqref{debut2-eps}--\eqref{theta-not-initi} converges for a subsequence, as $\ve\to0$, to a solution
$\big(\widehat{S}, \,P_w, \, P_g\big)$ of  the homogenized problem in  \eqref{hom-0}--\eqref{init1-hom} in the following topology:
$$
S^\ve \,\to\, \widehat{S} \quad\hbox{\rm in } L^q(\mathcal{Q}_T) \quad\hbox{\rm for any }q\in[1,+\infty);
$$
$$
p^\ve_w\,\rightharpoonup\, P_w,\quad \hbox{\rm and}\quad p^\ve_g\,\rightharpoonup\, P_g \ \ \hbox{\rm weakly in }
L^2(\mathcal{Q}_T).
$$
\end{theorem}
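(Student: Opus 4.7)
The plan is to combine the compactness results of Theorem \ref{thm_comp} with the stochastic two-scale convergence machinery from Theorem \ref{t_stoch2s_prop} and to pass to the limit in the global-pressure reformulation of the weak equations \eqref{wf-1}--\eqref{gf-2}. Since only $\sqrt{\lambda_w(S^\ve)}\,\nabla p_w^\ve$ and $\sqrt{\lambda_g(S^\ve)}\,\nabla p_g^\ve$ are bounded in $L^2$ while estimate \eqref{hhh9-ved} controls $\nabla{\mathsf P}^\ve$ and $\nabla\beta(S^\ve)$ in $L^2(\mathcal{Q}_T)$, I would systematically use the identities \eqref{bbb-2} to replace phase-pressure gradients by contributions involving the global pressure and the capillary term. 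Theorem \ref{t_stoch2s_prop} then yields correctors $u^1,v^1\in L^2(\mathcal{Q}_T;L^2_{\mathrm{pot}}(\Omega))$ with
\[
\nabla{\mathsf P}^\ve\mathop{\longrightarrow}\limits^{s2s}\nabla\widehat{\mathsf P}+u^1,\qquad
\nabla\beta(S^\ve)\mathop{\longrightarrow}\limits^{s2s}\nabla\beta(\widehat S)+v^1.
\]
The Birkhoff ergodic theorem gives $\Phi^\ve\rightharpoonup\Phi^{\mathrm{hom}}=\mathbf{E}\boldsymbol{\Phi}$ weakly, while \eqref{conve_1} and \eqref{conve_3} turn $\lambda_w(S^\ve)$, $\lambda_g(S^\ve)$ and, through $\widehat\Theta=(1-\widehat S)\varrho_g(P_g)$, also $\varrho_g(p_g^\ve)$ into strongly converging factors whenever $\widehat S<1$.

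Next I would insert admissible oscillating test functions of the form $\varphi(x,t)+\ve\psi_0(\mathcal{T}_{x/\ve}\tilde\omega)\varphi_1(x,t)$, with $\nabla_\omega\psi_0$ ranging over a dense subset of $L^2_{\mathrm{pot}}(\Omega)$ and $\varphi,\varphi_1$ vanishing on $\Gamma_{\mathrm{inj}}\times(0,T)$ and at $t=T$. The choice $\varphi_1\equiv 0$ produces the macroscopic equations, while $\varphi\equiv 0$ extracts the cell relations
\[
\boldsymbol{K}(\omega)\bigl[\lambda_w(\widehat S)(\nabla\widehat{\mathsf P}+u^1)+(\nabla\beta(\widehat S)+v^1)-\lambda_w(\widehat S)\vec g\bigr]\in L^2_{\mathrm{sol}}(\Omega),
\]
\[
\boldsymbol{K}(\omega)\bigl[\lambda_g(\widehat S)(\nabla\widehat{\mathsf P}+u^1)-(\nabla\beta(\widehat S)+v^1)-\lambda_g(\widehat S)\varrho_g(P_g)\vec g\bigr]\in L^2_{\mathrm{sol}}(\Omega).
\]
Setting $Z_w=\lambda_w(\widehat S)u^1+v^1$ and $Z_g=\lambda_g(\widehat S)u^1-v^1$, both lying in $L^2_{\mathrm{pot}}(\Omega)$ because $\widehat S$ is $\omega$-independent, each relation takes the standard form $\boldsymbol{K}(Z+\eta)\in L^2_{\mathrm{sol}}$ with deterministic source vectors $\eta_w=\lambda_w(\widehat S)(\nabla P_w-\vec g)$ and $\eta_g=\lambda_g(\widehat S)(\nabla P_g-\varrho_g(P_g)\vec g)$; here \eqref{gp+grad} is used to identify $\lambda_w\nabla P_w=\lambda_w\nabla\widehat{\mathsf P}+\nabla\beta(\widehat S)$ and similarly for $P_g$. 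By uniqueness and linearity of the auxiliary problem defining $\boldsymbol{\xi}$, one has $Z_w=\boldsymbol{\xi}\,\eta_w$ and $Z_g=\boldsymbol{\xi}\,\eta_g$, and $\mathbf{P}$-averaging delivers the effective fluxes $K^{\mathrm{hom}}\eta_w$ and $\varrho_g(P_g)K^{\mathrm{hom}}\eta_g$ that appear in \eqref{hom-0}--\eqref{hom-6}.

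The time-derivative and initial-data terms pass to the limit by combining the weak $L^2$ convergence $\Phi^\ve\rightharpoonup\Phi^{\mathrm{hom}}$ with the strong convergences in \eqref{conve_1} and \eqref{conve_3} and the bounds \eqref{nouv-100-ved}; the Dirichlet condition on $\Gamma_{\mathrm{inj}}$ is inherited by $\widehat{\mathsf P}$ from the weak $H^1$ convergence \eqref{conve_2}, and since $\widehat S=1$ there it propagates to $P_w$ and $P_g$ via \eqref{gp1}. The main obstacle I anticipate is the passage to the limit in the products involving $\varrho_g(p_g^\ve)$, namely $\lambda_g(S^\ve)\varrho_g(p_g^\ve)\nabla{\mathsf P}^\ve$ and $\varrho_g(p_g^\ve)\nabla\beta(S^\ve)$, since $\varrho_g(p_g^\ve)$ is accessible only implicitly through $\Theta^\ve$: on $\{\widehat S<1\}$ one recovers it strongly as $\Theta^\ve/(1-S^\ve)$, while on the complementary set the factor $\lambda_g(\widehat S)$ vanishes and absorbs the potentially singular contribution. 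Rigorously patching these regimes via a truncation and dominated-convergence argument in the spirit of \cite{our-siam} is the technical crux, after which the stochastic two-scale limits above combine to yield \eqref{hom-0}--\eqref{init1-hom}.
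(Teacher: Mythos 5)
Your proposal follows essentially the same route as the paper's proof: stochastic two-scale convergence of the global-pressure/capillary fluxes with correctors in $L^2_{\mathrm{pot}}(\Omega)$, oscillating test functions of the form $\ve\phi(x,t)\psi(\mathcal{T}_{x/\ve}\omega)$ to identify the correctors via the auxiliary cell problem and then macroscopic test functions to obtain \eqref{hom-0}, and the splitting of $\mathcal{Q}_T$ into $\{\widehat S=1\}$ and its complement (where $\lambda_g$ and $\nabla\beta(S^\ve)$ degenerate and absorb the undetermined $\varrho_g(p_g^\ve)$) to pass to the limit in the gas-density products --- the latter being precisely the content of the paper's key lemma. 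The only cosmetic difference is that you carry two separate correctors for $\nabla{\mathsf P}^\ve$ and $\nabla\beta(S^\ve)$ and recombine them into $Z_w$, $Z_g$, whereas the paper introduces a single corrector $\theta_w$, $\theta_g$ for each combined flux.
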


The proof of this theorem is given in the next section.


\subsection{Proof of homogenization theorem}
\label{proof-main-hom-th}

\begin{proof}[Proof of Theorem  \ref{main-hom-theor}]
The rigorous derivation of the limit problem relies on the above a priori
estimates and compactness results as well as on stochastic two-scale convergence
technique developed in \cite{PiZh2006}.

By the estimates in Theorem \ref{th-estima} and Theorem \ref{t_stoch2s_prop} we obtain
that almost surely for a subsequence
\begin{equation}\label{sto2sca1}
\lambda_w(S^\ve)\nabla p^\ve_w= \lambda_{w}(S^\ve) \nabla {\mathsf P}^\ve + \nabla \beta(S^\ve)
\mathop{\rightharpoonup}\limits^{s2s}
 \lambda_{w}(\widehat{S}) \nabla \widehat{\mathsf P} + \nabla \beta(\widehat{S})
 +\theta_w
\end{equation}
with $\theta_w=\theta_w(x,t,\omega)$,  $\theta_w\in L^2(\mathcal{Q}_T; L^2_{\mathrm{pot}}(\Omega))$,
and
\begin{equation}\label{sto2sca2}
\lambda_g(S^\ve)\nabla p^\ve_g= \lambda_{w}(S^\ve) \nabla {\mathsf P}^\ve + \nabla \beta(S^\ve)
\mathop{\rightharpoonup}\limits^{s2s}
 \lambda_{g}(\widehat{S}) \nabla \widehat{\mathsf P} + \nabla \beta(\widehat{S})
 +\theta_g
\end{equation}
with $\theta_g=\theta_g(x,t,\omega)$,  $\theta_g\in L^2(\mathcal{Q}_T; L^2_{\mathrm{pot}}(\Omega))$
\begin{lemma}
Under our standing assumptions, for a subsequence
\begin{equation}\label{sto2sca3}
K^\ve \big(\lambda_{w}(S^\ve) \nabla {\mathsf P}^\ve + \nabla \beta(S^\ve)\big)
\mathop{\rightharpoonup}\limits^{s2s}
 \boldsymbol{K}\big(\lambda_{w}(\widehat{S}) \nabla \widehat{\mathsf P} + \nabla \beta(\widehat{S})
 +\theta_w\big),
\end{equation}
\begin{equation}\label{sto2sca4}
K^\ve \varrho_g({\mathsf P}^\ve + {\mathsf G}_{g}(S^\ve))\big(\lambda_{g}(S^\ve) \nabla {\mathsf P}^\ve + \nabla \beta(S^\ve)\big)
\mathop{\rightharpoonup}\limits^{s2s}
 \boldsymbol{K}\varrho_g(\widehat{\mathsf P} + {\mathsf G}_{g}(\widehat{S}))\big(\lambda_{g}(\widehat{S}) \nabla \widehat{\mathsf P} + \nabla \beta(\widehat{S})
 +\theta_g\big).
\end{equation}
\end{lemma}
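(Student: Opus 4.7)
My strategy is to decouple each LHS expression into a strongly convergent scalar factor and a stochastically two--scale convergent vector factor, and then to absorb the oscillating matrix $K^\ve$ into the test function used in Definition \ref{def_s2sc}. The a priori bound \eqref{hhh9-ved} together with the uniform boundedness of $K^\ve$, $\lambda_w$, $\lambda_g$ and $\varrho_g$ shows both quantities are bounded in $L^2(\mathcal{Q}_T)^d$ almost surely, so by Theorem \ref{t_stoch2s_prop} each admits a stochastic two--scale limit along a common subsequence; only the identification of these limits remains.

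I would first record two auxiliary facts. (i) \emph{Product principle:} if $u^\ve\to u$ strongly in every $L^q(\mathcal{Q}_T)$, $q<\infty$, uniformly bounded in $L^\infty$, and $v^\ve\mathop{\rightharpoonup}\limits^{s2s} v^0$ in $L^2$, then $u^\ve v^\ve\mathop{\rightharpoonup}\limits^{s2s} u\,v^0$; the proof splits $u^\ve v^\ve=(u^\ve-u)v^\ve+u\,v^\ve$ and approximates $u$ by smooth functions in the s2s test integral. (ii) \emph{Absorption of $K^\ve$:} for $\varphi\in C^\infty(\mathcal{Q}_T)$ and $\psi\in C(\Omega)$, rewrite $\int K^\ve v^\ve\cdot\varphi\,\psi(\mathcal{T}_{x/\ve}\tilde\omega)\,dxdt$ as $\int v^\ve\cdot\varphi\,[\boldsymbol{K}^{\top}\psi](\mathcal{T}_{x/\ve}\tilde\omega)\,dxdt$ and invoke the extension of \eqref{ma_2s} to $L^2(\Omega)$ test functions (\cite[Lemma 5.1]{PiZh2006}) to conclude $K^\ve v^\ve\mathop{\rightharpoonup}\limits^{s2s}\boldsymbol{K}(\omega)\,v^0$. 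Finally, the weak convergences ${\mathsf P}^\ve\rightharpoonup\widehat{\mathsf P}$ and $\beta(S^\ve)\rightharpoonup\beta(\widehat S)$ in $L^2(0,T;H^1(\mathcal{Q}))$ (from \eqref{conve_2}, \eqref{conve_4} and \eqref{hhh9-ved}) combined with Theorem \ref{t_stoch2s_prop} produce $\pi_{\mathsf P},\pi_\beta\in L^2(\mathcal{Q}_T;L^2_{\mathrm{pot}}(\Omega))$ such that $\nabla{\mathsf P}^\ve\mathop{\rightharpoonup}\limits^{s2s}\nabla\widehat{\mathsf P}+\pi_{\mathsf P}$ and $\nabla\beta(S^\ve)\mathop{\rightharpoonup}\limits^{s2s}\nabla\beta(\widehat S)+\pi_\beta$.

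With these tools the first identity \eqref{sto2sca3} is immediate: continuity of $\lambda_w$ and the strong convergence \eqref{conve_1} give $\lambda_w(S^\ve)\to\lambda_w(\widehat S)$ strongly in all $L^q$ with a uniform $L^\infty$ bound, so the product principle delivers the s2s convergence of $\lambda_w(S^\ve)\nabla{\mathsf P}^\ve+\nabla\beta(S^\ve)$ to $\lambda_w(\widehat S)(\nabla\widehat{\mathsf P}+\pi_{\mathsf P})+\nabla\beta(\widehat S)+\pi_\beta$, and absorption of $K^\ve$ yields the claim with $\theta_w=\lambda_w(\widehat S)\pi_{\mathsf P}+\pi_\beta$; this vector still belongs to $L^2(\mathcal{Q}_T;L^2_{\mathrm{pot}}(\Omega))$ because $\lambda_w(\widehat S)$ is independent of $\omega$.

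The second identity contains the main technical obstacle: handling $\varrho_g(p^\ve_g)$ with $p^\ve_g={\mathsf P}^\ve+{\mathsf G}_g(S^\ve)$, for which only a weak limit of ${\mathsf P}^\ve$ is available. The idea is to exploit the already identified relation $\widehat\Theta=(1-\widehat S)\varrho_g(P_g)$ from Theorem \ref{thm_comp}. From $\Theta^\ve=(1-S^\ve)\varrho_g(p^\ve_g)\to\widehat\Theta$ strongly in $L^2$ and $S^\ve\to\widehat S$ strongly, a subsequence exists along which both convergences hold a.e., hence $\varrho_g(p^\ve_g)\to\varrho_g(P_g)$ a.e.\ on $\{\widehat S<1\}$. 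Since \textbf{(A.5)}--\textbf{(A.6)} enforce $\lambda_g(1)=\alpha(1)=0$, dominated convergence produces
\[
\lambda_g(S^\ve)\varrho_g(p^\ve_g)\longrightarrow\lambda_g(\widehat S)\varrho_g(P_g)\qquad\text{strongly in every }L^q(\mathcal{Q}_T),
\]
which together with the product principle disposes of the term $\varrho_g(p^\ve_g)\lambda_g(S^\ve)\nabla{\mathsf P}^\ve$. For the remaining contribution $\varrho_g(p^\ve_g)\nabla\beta(S^\ve)$ one uses that $\nabla\beta(\widehat S)=\alpha(\widehat S)\nabla\widehat S$ vanishes on $\{\widehat S=1\}$ (because $\alpha(1)=0$), the $L^\infty$ bound on $\varrho_g$, and the $L^2$ bound on $\nabla\beta(S^\ve)$, to show by a uniform integrability argument that
\[
\bigl(\varrho_g(p^\ve_g)-\varrho_g(P_g)\bigr)\nabla\beta(S^\ve)\longrightarrow 0\qquad\text{against every admissible s2s test function.}
\]
Absorption of $K^\ve$ then delivers \eqref{sto2sca4} with $\theta_g=\lambda_g(\widehat S)\pi_{\mathsf P}+\pi_\beta$. \emph{The hard part} is precisely this last step on the degeneracy set $\{\widehat S=1\}$, where $\varrho_g(p^\ve_g)$ need not converge pointwise: one must show that the oscillations of $\nabla\beta(S^\ve)$ do not concentrate on $\{\widehat S=1\}$, so that its pairing with the bounded but only a.e.\ convergent factor $\varrho_g(p^\ve_g)$ passes to the correct limit.
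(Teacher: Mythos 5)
Your overall strategy coincides with the paper's: absorb $K^\ve$ into the test function, identify $\theta_w=\lambda_w(\widehat S)\pi_{\mathsf P}+\pi_\beta$ by a product rule for (strongly convergent)$\times$(stochastically two-scale convergent) factors, and for \eqref{sto2sca4} split $\mathcal{Q}_T$ into $\{\widehat S=1\}$ and its complement, using $\varrho_g(p^\ve_g)=\Theta^\ve/(1-S^\ve)\to\varrho_g(P_g)$ a.e.\ on $\{\widehat S<1\}$ and the degeneracy $\lambda_g(1)=\alpha(1)=0$ on $\{\widehat S=1\}$. The treatment of \eqref{sto2sca3} and of the term $\lambda_g(S^\ve)\varrho_g(p^\ve_g)\nabla{\mathsf P}^\ve$ is sound.

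However, the step you yourself label ``the hard part'' is a genuine gap, and the tools you list cannot close it. What is needed is that $\mathbf{1}_{\{\widehat S=1\}}\nabla\beta(S^\ve)\to 0$ \emph{strongly} (convergence in $L^1(\mathcal{Q}_T)$ would suffice to pair with the bounded, non-convergent factor $\varrho_g(p^\ve_g)K^\ve\psi$). A uniform integrability argument based only on the $L^2$ bound for $\nabla\beta(S^\ve)$ does not exist: boundedness in $L^2$ gives no equi-integrability of $|\nabla\beta(S^\ve)|^2$ and does not prevent this quantity from concentrating on $\{\widehat S=1\}$; and the weak limit $\nabla\beta(S^\ve)\rightharpoonup\nabla\beta(\widehat S)=0$ on that set is useless against a factor that only converges a.e.\ off the set. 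The paper closes exactly this hole with the second auxiliary function $\mathfrak{b}$ from \eqref{bbb}: by \eqref{bbb-1} one has
\begin{equation*}
|\nabla\beta(S^\ve)|^2=\frac{\lambda_g(S^\ve)\,\lambda_w(S^\ve)}{\lambda(S^\ve)}\,|\nabla\mathfrak{b}(S^\ve)|^2 ,
\end{equation*}
and \eqref{hhh9-ved} provides a \emph{separate} uniform $L^2$ bound on $\nabla\mathfrak{b}(S^\ve)$; since $S^\ve\to1$ a.e.\ on $\{\widehat S=1\}$ and $\lambda_g(1)=0$, the bounded prefactor tends to zero a.e.\ there and annihilates the $L^2$-bounded $\nabla\mathfrak{b}(S^\ve)$ in the limit (the same mechanism, with $\lambda_g(S^\ve)\to0$ against the $L^2$-bounded $\nabla{\mathsf P}^\ve$, handles the other term, and yields $\mathbf{1}_{\{\widehat S=1\}}\theta_g=0$, consistent with your formula for $\theta_g$). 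Your proposal never invokes $\mathfrak{b}$ or the corresponding part of estimate \eqref{hhh9-ved}, so the decisive step remains unproved.
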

\begin{proof}
Since the function $K^\ve$ is statistically homogeneous and bounded,  the limit relation in \eqref{sto2sca3} immediately follows from \eqref{sto2sca1}.   Justification of the convergence in \eqref{sto2sca4} is more tricky. Denote
 $\{\widehat S=1\}$ the set $\{(x,t)\in\Omega_T\,:\,\widehat S(x,t)=1\}$,  and let
$\mathbf{1}_{\{\widehat S=1\}}$ be the corresponding characteristic function. From \eqref{conve_3}it is easy to deduce that
 $\big(1-\mathbf{1}_{\{\widehat S=1\}}\big) \varrho_g({\mathsf P}^\ve + {\mathsf G}_{g}(S^\ve))$ converges to
$\big(1-\mathbf{1}_{\{\widehat S=1\}}\big)\varrho_g(\widehat{\mathsf P} + {\mathsf G}_{g}(\widehat{S}))$
a.e., as $\ve\to0$. Considering  the boundedness of $\varrho_g$ and the properties of $K^\ve$ we conclude that
\begin{equation}\label{sto2sca6}
\begin{array}{rl}
\displaystyle
\big(1-\mathbf{1}_{\{\widehat S=1\}}\big)K^\ve \varrho_g({\mathsf P}^\ve +\!\!\!\!\!& {\mathsf G}_{g}(S^\ve))
\big(\lambda_{g}(S^\ve) \nabla {\mathsf P}^\ve + \nabla \beta(S^\ve)\big) \mathop{\rightharpoonup}\limits^{s2s}
\\[2mm]
\displaystyle
& \mathop{\rightharpoonup}\limits^{s2s}
\big(1-\mathbf{1}_{\{\widehat S=1\}}\big) \boldsymbol{K}
\varrho_g(\widehat{\mathsf P} + {\mathsf G}_{g}(\widehat{S}))
\big(\lambda_{g}(\widehat{S}) \nabla \widehat{\mathsf P} + \nabla \beta(\widehat{S}) +\theta_g\big).
\end{array}
\end{equation}
It remains to show that
\begin{equation}\label{sto2sca7}
\begin{array}{rl}
\displaystyle
\mathbf{1}_{\{\widehat S=1\}}K^\ve \varrho_g({\mathsf P}^\ve +\!\!\!\!\!& {\mathsf G}_{g}(S^\ve))
\big(\lambda_{g}(S^\ve) \nabla {\mathsf P}^\ve + \nabla \beta(S^\ve)\big) \mathop{\rightharpoonup}\limits^{s2s}
\\[2mm]
\displaystyle
& \mathop{\rightharpoonup}\limits^{s2s}
\mathbf{1}_{\{\widehat S=1\}} \boldsymbol{K}
\varrho_g(\widehat{\mathsf P} + {\mathsf G}_{g}(\widehat{S}))
\big(\lambda_{g}(\widehat{S}) \nabla \widehat{\mathsf P} + \nabla \beta(\widehat{S}) +\theta_g\big).
\end{array}
\end{equation}
Since $\lambda_g(1)=0$ and $\nabla \widehat{\mathsf P}^\ve$ is bounded in $L^2(\mathcal{Q}_T)$,
$$
\mathbf{1}_{\{\widehat S=1\}}\lambda_{g}(S^\ve) \nabla \widehat{\mathsf P}^\ve \longrightarrow 0
 =\mathbf{1}_{\{\widehat S=1\}}\lambda_{g}(\widehat{S}) \nabla \widehat{\mathsf P} \quad\hbox{strongly in }L^2(\mathcal{Q}_T).
$$
By \eqref{hhh9-ved} and the first relation in \eqref{bbb-1} we obtain
$$
\mathbf{1}_{\{\widehat S=1\}}\nabla \beta(S^\ve) \longrightarrow 0
 =\mathbf{1}_{\{\widehat S=1\}}\nabla \beta(\widehat{S})  \quad\hbox{strongly in }L^2(\mathcal{Q}_T).
$$
Therefore, $\mathbf{1}_{\{\widehat S=1\}}\theta_g=0$. Combining the last three relations yields \eqref{sto2sca7} and completes the proof of Lemma.
\end{proof}
Next we choose in the integral identity \eqref{gf-2} a test function of the form $\varphi_g(x,t)=\ve \phi(x,t)\psi(T_\frac x\ve \omega)$ with $\phi\in C^\infty(\mathbb R^d\times[0,T])$ that has a compact support in $\mathbb R^d\times[0,T)$,
and $\psi\in \mathcal{D}(\Omega)$. Then the first two integrals on the left-hand side of  \eqref{gf-2} tend to zero as $\ve\to0$.
Passing to the two-scale limit in the last two integrals  and considering \eqref{sto2sca4} we obtain
\begin{equation}
\label{limlim-ggg}
\begin{array}{c}
\displaystyle
 \int\limits_{\mathcal{Q}_T}\int\limits_\Omega \phi\boldsymbol{K}(\omega)
\varrho_g(\widehat{\mathsf P}+ {\mathsf G}_{g}(\widehat{S}))\big(\lambda_{g}(\widehat{S}) \nabla \widehat{\mathsf P} + \nabla \beta(\widehat{S})
 +\theta_g\big) \cdot \nabla_\omega \psi(\omega)\,dx dtd\mathbf{P}(\omega)\\[4mm]
 \displaystyle
-\int\limits_{\mathcal{Q}_T}\int\limits_\Omega  \phi\boldsymbol{K}(\omega) \lambda_g(\widehat S)
\left[\varrho_g(\widehat{\mathsf P}+ {\mathsf G}_{g}(\widehat{S})) \right]^2 \vec{g}\cdot
\nabla_\omega \psi(\omega)\, dx dt\mathbf{P}(\omega)
= 0
\end{array}
\end{equation}
Since $\phi$ is an arbitrary smooth function with a compact support, then for almost all $(x,t)\in\mathbb R^d\times[0,T]$
we have
$$
 \int\limits_\Omega \boldsymbol{K}(\omega)
\big[\lambda_{g}(\widehat{S}) \nabla \widehat{\mathsf P} + \nabla \beta(\widehat{S})
-\lambda_g(\widehat S)
\varrho_g(\widehat{\mathsf P}+ {\mathsf G}_{g}(\widehat{S}))  \vec{g}+\theta_g\big]\cdot
\Psi(\omega)\, \mathbf{P}(\omega)
= 0
$$
for each $\Psi\in L^2_{\rm pot}(\Omega)$. Taking into account the definition of $\boldsymbol{\xi}(\cdot)$ we arrive
at the following formula
\begin{equation}
\label{limlim-gugu}
\theta_g=\boldsymbol{\xi}(\omega)\big[ \lambda_{g}(\widehat{S}) \nabla \widehat{\mathsf P}+
\nabla \beta(\widehat{S})
-\lambda_g(\widehat S)
\varrho_g(\widehat{\mathsf P}+ {\mathsf G}_{g}(\widehat{S}))\big].
\end{equation}
It remains to choose a smooth test function $\varphi$ of the form $\varphi= \varphi(x,t)$ with a compact support
in $\mathbb R^d\times[0,T)$.   Considering \eqref{limlim-gugu} and passing to the two-scale limit in \eqref{gf-2} as
$\ve\to0$ yields the weak formulation of the second equation in \eqref{hom-0}. The first equation can be derived in a similar way with a number of simplifications.
The proof of the fact that the boundary and the initial conditions in \eqref{hom-5}--\eqref{init1-hom} are fulfilled is straightforward.
This completes the proof of Theorem  \ref{main-hom-theor}.
\end{proof}


\section*{Acknowledgments}
An essential part of this work was done during the visit of  A. Piatnitski at the Applied Mathematics Laboratory of the University of Pau in May 2019. The financial support of the visit and the hospitality of the people are gratefully
acknowledged.

\renewcommand{\baselinestretch}{0.8}

\end{document}